\documentclass[11pt]{article}
\usepackage{amssymb, amsmath, hyperref, array, color, bm}
\usepackage{tikz}
\usepackage{mathdots}

\setlength{\headheight}{-0.5cm} \setlength{\headsep}{0cm}
\setlength{\oddsidemargin}{-0.4cm}
\setlength{\evensidemargin}{-2.5cm} 
\setlength{\textheight}{22cm}
\setlength{\textwidth}{17cm}

\linespread{1.1} 

\newcommand{\hide}[1]{}

\newtheorem{defn}{Definition}
\newtheorem{notn}[defn]{Notation}
\newtheorem{lemma}[defn]{Lemma}
\newtheorem{proposition}[defn]{Proposition}
\newtheorem{theorem}[defn]{Theorem}
\newtheorem{corollary}[defn]{Corollary}
\newtheorem{remark}[defn]{Remark}
\newtheorem{example}[defn]{Example}

           {\vspace{3.3mm}
           \noindent{\bf #1}\it}%
           {\vspace{3.3mm}}

\newenvironment{proof}[1]{
  \trivlist \item[\hskip \labelsep{\it #1}]}{\hfill\mbox{$\square$}
  \endtrivlist}

\def\Z{\mathbb{Z}}

\def\R{{\rm{\bf R}}}

\def\D{{\rm{\bf D}}}

\def\sRes{{\rm sRes}}
\def\sResP{{\rm sResP}}

\def\Ind{{\rm{Ind}}}
\def\sign{{\rm{sign}}}

\newcommand {\W}      {\mathrm{MVar}}

\newcommand {\SR}      {\mathrm{SResP}}

\parindent = 0cm
\parskip = 0.2cm

\date{}

\title{A new general formula 
for the Cauchy Index on an interval with Subresultants}

\author{Daniel Perrucci$^{\flat}$\thanks{{\scriptsize Partially supported by the Argentinian grants} {\footnotesize UBACYT 20020160100039BA} 
{\scriptsize and} {\footnotesize PIP 11220130100527CO CO\-NI\-CET}
}  \quad  Marie-Fran\c{c}oise Roy$^{\sharp}$ \\[5mm]
{\small ${\flat}$ Departamento de Matem\'atica, FCEN, Universidad de Buenos Aires
and IMAS UBA-CONICET,}\\
{\small Ciudad Universitaria, 1428 Buenos Aires, Argentina}\\ 
{\small ${\sharp}$ IRMAR (UMR CNRS 6625), Universit\'e de Rennes 1,} \\
{\small Campus de Beaulieu, 35042 Rennes Cedex,  France}}

\begin{document}

\maketitle

\begin{abstract}

We present a new formula 
for the Cauchy index of a rational function
on an interval using subresultant polynomials.
There is no condition on the endpoints of the interval and the formula also involves in some cases
less subresultant polynomials. 
\end{abstract}

\bigskip

\noindent  {\small \textbf{Keywords:}  Cauchy Index, Subresultant Polynomials.} 

\medskip

\noindent  {\small \textbf{AMS subject classifications:} 14P99,  13P15, 12D10,  26C15}

\section{Introduction}
\label{sec:introd}

Let $(\R, \le)$ be a real closed field and $P, Q \in \R[X]$ with $P \ne 0$. 
Already considered by Sturm and Cauchy (\cite{Stu,Cau}), the Cauchy index of the rational function $\displaystyle{\frac{Q}P}$ is the integer number which counts its number of \emph{jumps}  
from $-\infty$ to 
$+\infty$ minus its number of \emph{jumps} from $+\infty$ to $-\infty$. This value
plays an important role in many algorithms in real algebraic geometry (\cite{Barnett, BPRbook,  Henrici}). 
For instance, 
the Tarski query of $Q$ for $P$, defined as
$$
\hbox{TaQ}(Q, P)
=
\# \Big\{x \in \R \ | \ P(x) = 0, \ Q(x) > 0 \Big\} \ - \ 
\# \Big\{x \in \R \ | \ P(x) = 0, \ Q(x) < 0 \Big\},
$$  is equal to the Cauchy index of the rational function $\displaystyle{\frac{P'Q}P}$
(see, for instance, \cite[Proposition 2.57]{BPRbook}). Tarski queries are used to 
solve 
the 
\emph{sign determination problem}, which consists in 
listing
the signs of a list of polynomials in $\R[X]$ evaluated
at the roots in $\R$ of another polynomial in $\R[X]$ (see \cite[Section 10.3]{BPRbook}). 
In particular, 
the 
number of real roots of a polynomial $P \in \R[X]\setminus\{0\}$
coincides with 
the Tarski query 
$\hbox{TaQ}(1, P)$ and is equal to 
the Cauchy index of the rational function $\displaystyle{\frac{P'}P}$.
We can also mention the role of the Cauchy index for complex roots counting (see  \cite{Cau,Eis,PerRoy}).
For more information and references to the history of the Cauchy index see \cite{Eis}.

\subsection{Cauchy index}

Let $P, Q \in \R[X]$ with $P \ne 0$.  The usual definition of the Cauchy index of $\displaystyle{\frac{Q}P}$ 
is  made directly on intervals whose endpoints  are not roots of $P$. In this paper 
we use the extended definition of the Cauchy index 
introduced in \cite[Section 3]{Eis}, which is
made first locally at elements in $\R$,  and 
then on intervals without restriction.

\begin{defn}\label{defn:CI_at_a_point} 
Let $x \in \R$ and $P, Q \in \R[X], P\not=0$.
\begin{itemize}

\item  If $Q\not=0$,
the rational function $\displaystyle{\frac{Q}P}$ can be written uniquely as
$$
\frac{Q}{P} = (X - x)^{m}\frac{\widetilde Q}{\widetilde P}
$$
with $m\in \Z$, $\widetilde P, \widetilde Q \in \R[X]$, $\widetilde P$ monic, 
$\widetilde P$ and $\widetilde Q$                                                 
coprime                                                 
and  $\widetilde P(x) \ne 0, \widetilde Q(x) \ne 0$. 
For $\varepsilon \in \{+, -\}$, define
$$
{\rm Ind}_x^{\varepsilon} \Big( \frac{Q}{P} \Big) = \left\{
\begin{array}{ll}
\frac{1}2 \cdot \sign\Big( 
\displaystyle{\frac{\widetilde{Q}(x)}{\widetilde{P}(x)}} \Big)  & 
\hbox{if } \varepsilon = + \hbox{ and } m<0,\\[3mm]
\frac{1}2 \cdot (-1)^{m} \cdot \sign\Big( 
\displaystyle{\frac{\widetilde{Q}(x)}{\widetilde{P}(x)}} \Big)  & 
\hbox{if } \varepsilon = - \hbox{ and } m<0.
 \end{array}
\right.
$$
In all other cases, define
$$
{\rm Ind}_x^{\varepsilon} \Big( \frac{Q}{P} \Big) = 0
$$

\item The Cauchy index of $ \displaystyle{\frac{Q}P}$ at $x$ is 
$$
{\rm Ind}_x\Big( \frac{Q}{P} \Big) = 
{\rm Ind}_x^{+} \Big( \frac{Q}{P} \Big) 
-
{\rm Ind}_x^{-} \Big( \frac{Q}{P} \Big).
$$
\end{itemize}
\end{defn}

Said in other terms, when $x$ is a pole of $\displaystyle{\frac{Q}P}$, 
we have that $\displaystyle{{\rm Ind}_x^{+} \Big( \frac{Q}{P} \Big)}$ is one half of the sign of 
$\displaystyle{\frac{Q}P}$ \emph{to the right} of $x$, and 
$\displaystyle{{\rm Ind}_x^{-} \Big( \frac{Q}{P} \Big)}$ is one half of the sign of 
$\displaystyle{\frac{Q}P}$ \emph{to the     left} of $x$. 
Then, the Cauchy index of $\displaystyle{\frac{Q}P}$ at $x$, $\displaystyle{{\rm Ind}_x \Big( \frac{Q}{P} \Big)}$, 
is simply the difference between them. 
We illustrate this notion considering the graph of the function
$\displaystyle{\frac{Q}P}$ around $x$ in each different case.

\begin{center}
\begin{tikzpicture}
      \draw[-] (-6.5,0) -- (-4,0);
      \draw[-] (-3,0) -- (-0.5,0);
      \draw[-] (0.5,0) -- (3,0);
      \draw[-] (4,0) -- (6.5,0);
      \draw[-] (-5.2,-0.1) -- (-5.2,0.1) node[above] {$x$} ;
      \draw[-] (-1.7,-0.1) -- (-1.7,0.1) node[above] {$x$} ;
      \draw[-] (1.7,-0.1) -- (1.7,0.1) node[above] {$x$} ;
      \draw[-] (5.2,-0.1) -- (5.2,0.1) node[above] {$x$} ;
      \draw[line width=0.8pt, domain=-6.5:-5.3,smooth,variable=\x] plot ({\x+0.05},{1/(\x+5)+1.85});
      \draw[line width=0.8pt, domain=-5.05:-4,smooth,variable=\x,] plot ({\x-0.05},{-1/(\x+5.4)+1.35});
      \draw[line width=0.8pt, domain=-3:-1.8,smooth,variable=\x] plot ({\x},{1/(\x+1.5)+2});
      \draw[line width=0.8pt, domain=-1.5:-0.5,smooth,variable=\x] plot ({\x-0.05},{1/(\x+1.8)-2});
      \draw[line width=0.8pt, domain=0.5:1.5,smooth,variable=\x] plot ({\x+0.1},{-2/(\x-2)-2.5});
      \draw[line width=0.8pt, domain=1.65:3,smooth,variable=\x] plot ({\x+0.05},{-0.3/(\x-1.5)+0.75});
      \draw[line width=0.8pt, domain=4:5.25,smooth,variable=\x,] plot ({\x-0.05},{-0.2/(\x-5.4)+0.25});
      \draw[line width=0.8pt, domain=5.25:5.9,smooth,variable=\x] plot ({\x+0.1},{1/(\x-5)-2.4}); 
      \node at (-5.2,-2.1) {$\Ind_x\Big(\displaystyle{\frac{Q}P}\Big) = 0 $};
      \node at (-1.7,-2.1) {$\Ind_x\Big(\displaystyle{\frac{Q}P}\Big) = 1 $};
      \node at (1.7,-2.1) {$\Ind_x\Big(\displaystyle{\frac{Q}P}\Big) = -1 $};
      \node at (5.2,-2.1) {$\Ind_x\Big(\displaystyle{\frac{Q}P}\Big) = 0 $};
      
      \end{tikzpicture}
\end{center}

\begin{defn} Let $a, b \in \R$ with $a < b$ and $P, Q \in \R[X]$ with $P \ne 0$. 
If $Q \ne 0$, the Cauchy index of $ \displaystyle{\frac{Q}P }$ on $[a, b]$ is
$$
{\rm Ind}_a^b\Big( \frac{Q}{P} \Big) = 
{\rm Ind}_a^+\Big( \frac{Q}{P} \Big)  + \sum_{x \in (a,b) }  {\rm Ind}_x \Big( \frac{Q}{P} \Big) 
- {\rm Ind}_b^-\Big( \frac{Q}{P} \Big),
$$
where the sum is well-defined since only roots $x$ of $P$ in $(a, b)$ contribute. 

Similarly, 
if $Q \ne 0$, 
the Cauchy index of 
$ \displaystyle{\frac{Q}P }$ on $\R$ is
$$
{\rm Ind}_{\R}\Big( \frac{Q}{P} \Big) = 
\sum_{x \in \R }  {\rm Ind}_x \Big( \frac{Q}{P} \Big)
$$
where, again, the sum is well-defined since only roots $x$ of $P$ in $\R$ contribute. 

If $Q = 0$, both
the Cauchy index of $ \displaystyle{\frac{Q}P }$ on $[a, b]$ 
and 
the Cauchy index of $ \displaystyle{\frac{Q}P }$ on $\R$ 
are defined as $0$. 
\end{defn}

In the following picture we consider again the graph of the function
$\displaystyle{\frac{Q}P}$, this time in $[a, b]$.

\begin{center}
\begin{tikzpicture}
      \draw[-] (-7,0) -- (-1,0);
      \draw[-] (1,0) -- (7,0);
      \draw[-] (-6.8,-0.1) -- (-6.8,0.1) node[above] {$a$};
      \draw[-] (-1.2,-0.1) -- (-1.2,0.1) node[above] {$b$};
      \draw[-] (1.2,-0.1) -- (1.2,0.1) node[above] {$a$};
      \draw[-] (6.8,-0.1) -- (6.8,0.1) node[above] {$b$} ;
      \draw[line width=0.8pt, domain=-7:-6.13,smooth,variable=\x] plot ({\x+0.05},{1/(\x+5.9)+3});
      \draw[line width=0.8pt, domain=-5.9:-4.1,smooth,variable=\x,] plot ({\x},{-1/((\x+6.05)*(\x+3.95))-1.3});
      \draw[line width=0.8pt, domain=-3.9:-2.2,smooth,variable=\x] plot ({\x},{(\x+3)/((\x+4.11)*(\x+1.9))});
      \draw[line width=0.8pt, domain=-1.9:-1,smooth,variable=\x] plot ({\x-0.1},{(\x+3)/((\x+4.1)*(\x+2.05))-1.2});
      \draw[line width=0.8pt, domain=1:2.46,smooth,variable=\x] plot ({\x},{-(\x-3)/((\x-4.1)*(\x-2.55))-1.5});
      \draw[line width=0.8pt, domain=2.5:5,smooth,variable=\x] plot ({\x},{-0.65*(\x-3.5)/((\x-5.2)*(\x-2.35))+0.3});
      \draw[line width=0.8pt, domain=5.2:6.69,smooth,variable=\x] plot ({\x},{-0.5*(\x-6)/((\x-5.05)*(\x-6.8))+0.3});
      \node at (-4,-2.1) {$\Ind_a^b\Big(\displaystyle{\frac{Q}P}\Big) = 1 + 0 +1 = 2 $};
      \node at (4,-2.1) {$\Ind_a^b\Big(\displaystyle{\frac{Q}P}\Big) = -1 -1 -\frac12 = -\frac52$};
      \end{tikzpicture}
\end{center}

Note that with this extended definition of the Cauchy index, the Cauchy index of a rational function on an interval belongs to $\frac12 \Z$ and it is not necessarily an integer number.

\subsection{Sturm sequences and Cauchy index}

\begin{defn} Let $P, Q \in \R[X], P\not=0$. 
Define $S_0=P$ and,
if $Q\not=0$,
\begin{eqnarray*}
S_1&=&Q,\\
S_{2}&=&-
{\rm Rem}(S_{0},S_{1}),\\
&\vdots&\\
S_{i+1}&=&-
{\rm Rem}(S_{i-1},S_{i}),\\
&\vdots&\\
S_{s+1}&=&-
{\rm Rem}(S_{s-1},S_{s})=0,
\end{eqnarray*}
with $S_1, S_2, \dots , S_s \ne 0$, where ${\rm Rem}$ is the remainder 
in the euclidean division in $\R[X]$ of the first polynomial 
by the second polynomial. 

The Sturm sequence  of $P$ and $Q\not=0$
 is $(S_0,\ldots,S_s)$ and the Sturm sequence  of $P$ and $0$ is $S_0$, with $s=0$. We denote by $(d_0,\ldots,d_s)$ the degrees of $(S_0,\ldots,S_s)$.
\end{defn}

\begin{example}\label{example0}
Let $\alpha, \beta \in \R$ and $P = X^5 + \alpha X + \beta \in \R[X]$.
If  $\alpha \ne 0$ and $256\alpha^5 + 3125\beta^4 \ne 0$, 
the Sturm sequence of $P$ and $P'$ is $(S_0,S_1,S_2,S_3)$ with
$$
\begin{array}{lcl}
S_0 &=&  P = X^5 + \alpha X + \beta,  \\[2mm]
S_1 &=& P' = 5X^4 + \alpha,  \\[2mm]
S_2 &=&   \frac{-4\alpha}{5} X - \beta\\[2mm]
S_3 &=& \frac{-(256\alpha^5 + 3125\beta^4)}{256\alpha^4} .
\end{array}
$$
In this case,
$d_0 =5, d_1 = 4, d_2 = 1$, $d_3 = 0$.
\end{example}

Extending the classical results by Sturm (\cite{Stu})
and recent results by
\cite{Eis},  
we now explain that the Sturm sequence of $P$ and $Q$  
gives a formula for the general definition of the Cauchy index on an interval $[a, b]$ under 
no assumptions on $a$ and $b$. 
To do so, 
it is first needed to extend the notion of sign of a rational
function to degenerate cases.

\begin{defn} Let $P, Q \in \R[X] \setminus\{0\}$. 
Using the same notation as in Definition \ref{defn:CI_at_a_point}, we define
$$
\sign\Big(\frac{Q}P, x\Big) = \left\{
\begin{array}{ll}
\sign\big(
 \displaystyle{
 {\widetilde Q(x)}{\widetilde P(x)}
 }
 \big) \in \{-1, 1\} & \hbox{if }m=0, \\[3mm]
 0  & \hbox{otherwise } .
\end{array}
\right.
$$
We define also $\displaystyle{\sign\Big(\frac{Q}P, x\Big)} = 0$ if $Q=0$.
\end{defn}

In other words, if $x$ is a pole of
$\displaystyle{\frac{Q}P}$, the sign of $\displaystyle{\frac{Q}P}$ at $x$
is $0$; otherwise, it is simply the 
sign of the continuous extension of $\displaystyle{\frac{Q}P}$ at $x$.
Notice that 
if $Q\ne 0$, 
$\displaystyle{\sign\Big(\frac{Q}P, x\Big) = 
\sign\Big(\frac{P}Q, x\Big)}$.

We now state the general result relating the Cauchy index and the Sturm sequence, 
which will be 
proved at the end of Section \ref{sec:sigma_tau}.

\begin{theorem}\label{thm:eiser}
Let
$a, b \in \R$ with $a < b$ 
and $P, Q
\in \R[X], P\not=0$, 
$\deg Q = q < \deg P = p $.  
If $(S_0, \dots, S_s)$ is the Sturm sequence of $P$ and $Q$, then
$$
\Ind_a^b\Big(\frac{Q}{P}\Big) 
= 
\frac{1}{2}\sum_{0 \le i \le s-1} 
\left(
\sign\Big( \frac{S_{i+1}}{S_{i}}, b \Big) 
-
\sign\Big( \frac{S_{i+1}}{S_{i}}, a \Big) 
\right).
$$
\end{theorem}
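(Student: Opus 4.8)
The plan is to prove the identity by reducing it to a purely local statement at each point of $\R$. Write $G(x)=\tfrac12\sum_{0\le i\le s-1}\sign(S_{i+1}/S_i,x)$, so that the claim reads $\Ind_a^b(Q/P)=G(b)-G(a)$. First I would dispose of the case $Q=0$, where both sides are $0$, and then reduce to $\gcd(P,Q)=1$. If $D=\gcd(P,Q)$, the signed remainder recurrence $S_{i-1}+S_{i+1}=A_iS_i$ shows that $D\mid S_i$ for all $i$ and, after dividing through, that $(S_i/D)_i$ is exactly the Sturm sequence of $P/D$ and $Q/D$. Since $Q/P=(Q/D)/(P/D)$ the left-hand side is unchanged, and since $\sign\big((DS_{i+1})/(DS_i),x\big)=\sign(S_{i+1}/S_i,x)$ the right-hand side is unchanged as well. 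Hence I may assume the $S_i$ are consecutively coprime and $S_s$ is a nonzero constant.

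Next I would use additivity. Both sides are additive under subdivision of $[a,b]$: for the left-hand side this is immediate from the definition since $(a,b)=(a,c)\cup\{c\}\cup(c,b)$, and for the right-hand side it is the telescoping $G(b)-G(a)=(G(b)-G(c))+(G(c)-G(a))$. Partitioning $[a,b]$ at all the finitely many roots of $S_0\cdots S_s$ it contains, I reduce to an interval whose open part meets no root of any $S_i$. On such an interval there are no interior poles, so $\Ind_a^b(Q/P)=\Ind_a^+(Q/P)-\Ind_b^-(Q/P)$, while $G$ is constant on the open interval, giving $G(b)-G(a)=\big(G(c^+)-G(c)\big)\big|_{c=a}-\big(G(c^-)-G(c)\big)\big|_{c=b}$, where $G(c^+)$ and $G(c^-)$ denote the values of $G$ immediately to the right and left of $c$. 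Thus it suffices to prove, for every $c\in\R$, the two local identities $\Ind_c^+(Q/P)=G(c^+)-G(c)$ and $\Ind_c^-(Q/P)=G(c^-)-G(c)$.

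For the local analysis I would write $S_i=(X-c)^{\mu_i}T_i$ with $T_i(c)\ne0$ and set $\epsilon_i=\sign(T_i(c))\in\{\pm1\}$. Computing the three relevant signs directly yields $\sign(S_{i+1}/S_i,c^+)=\epsilon_i\epsilon_{i+1}$, $\sign(S_{i+1}/S_i,c^-)=(-1)^{\mu_{i+1}-\mu_i}\epsilon_i\epsilon_{i+1}$, and $\sign(S_{i+1}/S_i,c)=\epsilon_i\epsilon_{i+1}$ if $\mu_i=\mu_{i+1}$ and $0$ otherwise; hence the terms with $\mu_i=\mu_{i+1}$ cancel and
$$G(c^+)-G(c)=\tfrac12\sum_{i:\,\mu_i\ne\mu_{i+1}}\epsilon_i\epsilon_{i+1},\qquad G(c^-)-G(c)=\tfrac12\sum_{i:\,\mu_i\ne\mu_{i+1}}(-1)^{\mu_{i+1}-\mu_i}\epsilon_i\epsilon_{i+1}.$$
Two structural facts then drive the rest. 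Since the recurrence gives $\gcd(S_{i-1},S_i)=\gcd(S_i,S_{i+1})$, this gcd is constant in $i$ and equals $\gcd(P,Q)=1$, so $\min(\mu_i,\mu_{i+1})=0$ and the indices $j$ with $\mu_j>0$ are isolated; moreover, if $0<j<s$ with $\mu_j>0$, evaluating $S_{j-1}+S_{j+1}=A_jS_j$ at $c$ gives $S_{j-1}(c)=-S_{j+1}(c)$, whence $\epsilon_{j-1}=-\epsilon_{j+1}$.

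Finally I would group each sum by the isolated positions $j$ with $\mu_j>0$. A position $j$ with $0<j<s$ enters only through the indices $i=j-1$ and $i=j$, whose combined contribution is $\epsilon_j(\epsilon_{j-1}+\epsilon_{j+1})=0$ in the first sum and $(-1)^{\mu_j}\epsilon_j(\epsilon_{j-1}+\epsilon_{j+1})=0$ in the second. Since $\mu_s=0$, the only surviving term is $j=0$, present exactly when $\mu_0>0$, equivalently when $c$ is a pole (as then $\mu_1=0$); it contributes $\tfrac12\epsilon_0\epsilon_1$ and $\tfrac12(-1)^{\mu_0}\epsilon_0\epsilon_1$ respectively. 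Comparing with the definition, where the order is $m=\mu_1-\mu_0=-\mu_0$ and $\sign(\widetilde Q(c)/\widetilde P(c))=\epsilon_0\epsilon_1$, these are exactly $\Ind_c^+(Q/P)$ and $\Ind_c^-(Q/P)$, closing the argument. The main obstacle is the bookkeeping in this last step: one must arrange the multiplicity and sign data so that the recurrence-driven cancellation $\epsilon_{j-1}+\epsilon_{j+1}=0$ at every interior root is visibly the mechanism that annihilates all contributions except the boundary one at $j=0$.
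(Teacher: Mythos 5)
Your proof is correct, but it follows a genuinely different route from the one in the paper. The paper obtains Theorem \ref{thm:eiser} as the special case $\sigma=\tau=(1,\dots,1)$ of Corollary \ref{cor:main_CI}, which rests on Proposition \ref{prop:Cauchy_Ind_eval}: an induction on the length of a special $(\sigma,\tau)$-chain whose inductive step removes one link using Lemmas \ref{lem:mult_constant} and \ref{lem:reduce} together with the inversion formula (Proposition \ref{prop:inv_formula}, quoted from Eisermann without proof). You instead localize the identity: after reducing to $\gcd(P,Q)=1$ (which leaves both sides unchanged, since each $\sign(S_{i+1}/S_i,x)$ depends only on the rational function), you use additivity of both sides under subdivision at the roots of $S_0\cdots S_s$ to reduce everything to two one-sided identities at a single point $c$, and you verify these by a direct computation with the multiplicities $\mu_i$ of $X-c$ in $S_i$ and the signs $\epsilon_i$ of the cofactors. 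The three-term recurrence $S_{j-1}+S_{j+1}=A_jS_j$ forces $\epsilon_{j-1}=-\epsilon_{j+1}$ at each interior root, annihilating all contributions except the boundary one at $j=0$, which matches Definition \ref{defn:CI_at_a_point} exactly; the needed structural facts ($\min(\mu_i,\mu_{i+1})=0$ from the constancy of $\gcd(S_i,S_{i+1})$, and $\mu_s=0$) are established correctly. This is essentially the classical Sturm cancellation argument adapted to the extended half-integer index. What it buys is self-containedness and elementarity: you never invoke the inversion formula as a black box (your $j=0$ computation is in effect a proof of it for the pair $(S_0,S_1)$). What it gives up is the chain formalism: your argument is tied to the unit coefficients of the signed remainder recurrence, whereas the paper's induction over $(\sigma,\tau)$-chains is set up precisely so that the same machinery later absorbs the sign bookkeeping of the subresultant recurrence in Theorem \ref{thm:main_thm}.
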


Adding the condition that $a$ and $b$ 
are not common roots of $P$ and $Q$, from Theorem \ref{thm:eiser} a
\emph{sign-variation-counting} formula for the Cauchy index is obtained.

\begin{defn}
Let $x \in \R$ and $P, Q \in \R[X]$,
we 
define
 the sign variation of $(P, Q)$ at $x$ 
by
$${\rm Var}_x(P,Q)= \frac12  \Big|\sign(P(x)) - \sign(Q(x)) \Big|.$$
If $a, b \in \R$ with $a < b$, we denote
by ${\rm Var}_a^b(P, Q)$
the sign variation of $(P, Q)$ at $a$ minus
the sign variation of $(P, Q)$ at $b$;  namely, 
$${\rm Var}_a^b(P, Q)=
{\rm Var}_a(P, Q) -{\rm Var}_b(P, Q).
$$
\end{defn}

Note that for $x \in \R$, 
$$
{\rm Var}_x(P, Q)
= \left\{\begin{array}{cl}
          0 & \hbox{if } P(x) \hbox{ and } Q(x) \hbox{ have same sign},\\[1mm]
	            1 & \hbox{if } P(x) \hbox{ and } Q(x) \hbox{ have opposite non-zero sign},\\[1mm]
	  \frac12 & \hbox{if exactly one of } P(x) \hbox{ and } Q(x) \hbox{ has zero sign}.
	  \end{array}
\right.
$$
Moreover, if $x$ is not a common root of $P$ and $Q$, then
\begin{equation}
\label{signvar}
\sign\Big(\frac{Q}{P}, x\Big) = 
1 -  2{\rm Var}_x(P, Q).
\end{equation}

The following result then follows clearly from Theorem \ref{thm:eiser}.
\begin{theorem}\label{thm:eiser_variations}
Let
$a, b \in \R$ with $a < b$ 
and 
 $P, Q
\in \R[X], P\not=0$, 
$\deg Q = q < \deg P = p $. 
If
$a$ and $b$ are not common roots of $P$ and $Q$
and $(S_0, \dots, S_s)$ is the Sturm sequence of $P$ and $Q$, then
$$
\Ind_a^b\Big(\frac{Q}{P}\Big) 
= 
\sum_{0 \le i \le s-1} 
{\rm Var}_a^b(S_{i}, S_{i+1}).$$
\end{theorem}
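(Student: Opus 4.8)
The plan is to deduce the formula directly from Theorem \ref{thm:eiser} by rewriting each signed term through the identity (\ref{signvar}). If $Q = 0$ then $s = 0$, the sum on the right-hand side is empty, and both sides vanish by definition; so I would assume $Q \ne 0$, in which case every $S_i$ with $0 \le i \le s$ is nonzero and each quotient $S_{i+1}/S_i$ appearing below is well defined.

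First I would establish the structural fact that neither $a$ nor $b$ is a common root of any pair of consecutive terms $(S_i, S_{i+1})$. Indeed, from the euclidean division defining the Sturm sequence one has $S_{i-1} = A_i S_i - S_{i+1}$ for a suitable quotient $A_i \in \R[X]$, so any common root $x$ of $S_i$ and $S_{i+1}$ is automatically a root of $S_{i-1}$ as well. Arguing by downward induction, such an $x$ would be a common root of $S_0 = P$ and $S_1 = Q$. Since by hypothesis $a$ and $b$ are not common roots of $P$ and $Q$, they cannot be common roots of any consecutive pair $(S_i, S_{i+1})$, and therefore (\ref{signvar}) applies to each such pair at both endpoints.

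Applying (\ref{signvar}) with $(P, Q)$ replaced by $(S_i, S_{i+1})$ gives, for $x \in \{a, b\}$,
$$
\sign\Big(\frac{S_{i+1}}{S_i}, x\Big) = 1 - 2\,{\rm Var}_x(S_i, S_{i+1}).
$$
Substituting this for $x = a$ and $x = b$ into the formula of Theorem \ref{thm:eiser}, the constant terms $1$ cancel within each difference, the leading factor $\tfrac12$ absorbs the factor $2$, and the two remaining contributions reassemble into ${\rm Var}_a(S_i, S_{i+1}) - {\rm Var}_b(S_i, S_{i+1}) = {\rm Var}_a^b(S_i, S_{i+1})$, summed over $0 \le i \le s-1$. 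This yields the claimed identity.

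Since the argument is essentially a termwise substitution, I do not expect a serious obstacle; the only point requiring genuine care is the structural lemma on common roots of consecutive remainders, which is exactly what licenses the use of (\ref{signvar}) at $a$ and $b$. I would also double-check the sign bookkeeping in the cancellation, to confirm that the factor $-2$ coming from the two occurrences of (\ref{signvar}) combines correctly with the leading $\tfrac12$ and produces the difference ${\rm Var}_a^b$ with the right orientation.
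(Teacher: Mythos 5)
Your proposal is correct and follows exactly the route the paper intends: the paper states that Theorem \ref{thm:eiser_variations} ``follows clearly'' from Theorem \ref{thm:eiser} via the identity (\ref{signvar}), which is precisely your termwise substitution, and your sign bookkeeping (the $1$'s cancelling, the $\tfrac12$ absorbing the $2$, and the orientation of ${\rm Var}_a^b$) checks out. The structural lemma that a common root of $S_i$ and $S_{i+1}$ propagates down to a common root of $P$ and $Q$ is the one detail the paper leaves implicit, and you supply it correctly.
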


Theorem \ref{thm:eiser_variations} is a generalization of the classical Sturm theorem 
\cite{Stu,BPRbook},
since $a$ or $b$ can be root of $P$ or $Q$ (but not of both).

\subsection{Subresultant polynomials}

Subresultant polynomials are polynomials which are proportional to the ones in the 
 Sturm sequence, but enjoy better properties since their coefficients belong to the ring generated by the coefficients of $P$ and $Q$.
We include  definitions and properties concerning subresultant polynomials. We refer
the reader to \cite{BPRbook} for proofs and details.

Let $\D$ be a domain and let ${\rm ff}(\D)$ be its fraction field.

\begin{defn}\label{def:subres} Let $P = a_pX^p + \dots + a_0, 
Q = b_qX^q + \dots + b_0 \in \D[X] \setminus \{0\}$ with $\deg P
 = p \ge 1$ 
and
$\deg Q
 = q < p$. 
For $0 \le j \le q$, the $j$-th 
subresultant polynomial of 
$P$ and $Q$, ${\sResP}_j (P, Q) \in \D[X]$ 
is 
$$
\det
\underbrace{
\left(\begin{array}{cccccccc}
 a_p & a_{p-1}  & \dots   &          &            &            &        & X^{q-j-1}P \cr
 0   & a_p      & a_{p-1} &  \dots   &            &            &        & X^{q-j-2}P \cr
     &  \ddots  & \ddots  &  \ddots  &            &            &        & \vdots \cr
     &          & 0       & a_p      &  a_{p-1}   & \dots      &        & P \cr
     &          &         & 0        & b_q        &  b_{q-1}   &\dots   & Q \cr
     &          & \iddots & \iddots  &  \iddots   &            &        & \vdots \cr
     &  \iddots & \iddots &  \iddots &            &            &        & \vdots \cr
 0   & b_q      & b_{q-1} & \dots    &            &            &        & X^{p-j-2}Q \cr
 b_q & b_{q-1}  & \dots   &          &            &            &        &X^{p-j-1}Q \cr
  \end{array}\right)
}
 \in \D[X].
$$
$$
p+q-2j
$$ 
By convention, we
  extend this definition 
  with
  \begin{eqnarray*}
    {\sResP}_p (P, Q) & = & P \ \in \D[X],\\
    {\sResP}_{p - 1} (P, Q) & = & Q \ \in \D[X],\\
    {\sResP}_j (P, Q) & = & \ 0  \ \in \D[X] \qquad \hbox{ for } q < j < p - 1.
  \end{eqnarray*}
We also define ${\sResP}_p (P, 0)=P, {\sResP}_j (P, 0)=0, j=0,\ldots,p-1.$
\end{defn}

Note that in the matrix above, all the entries in the first $p + q - 2j -1$ columns are elements in $\D$, 
and all the entries in the last column are elements in $\D[X]$. 
Doing column operations, it is easy to prove that for $0 \le j \le p$, 
$$\deg \sResP_j(P, Q) \le j.$$ 
Note also that in the case $q = p-1$, we have given two definitions for $\sResP_q(P, Q)$,
both equal to $Q$ so that 
there is no ambiguity.

\begin{defn} \label{def:subres2}
Let $P, Q \in \D[X] \setminus \{0\}$ with $\deg P
 = p \ge 1$ 
and
$\deg Q
 = q < p$. 

\begin{itemize}  
  
\item For $0 \le j \le q$, 
the $j$-th 
subresultant coefficient of $P$ and $Q$, ${\sRes}_j (P,
Q)\in \D$  is the coefficient of $X^j$ in
${\sResP}_j (P, Q)$. 
By convention, we extend this definition with  
\begin{eqnarray*}
    {\sRes}_p(P, Q) & = & 1 \ \in \D \qquad \hbox{(even if } P \hbox{ is not monic)},\\
    {\sRes}_j(P, Q) & = & 0 \ \in \D \qquad \hbox{ for } q < j \le p - 1.
  \end{eqnarray*}

\item For $0 \le j \le p$, ${\sResP}_j (P, Q)$ is said to be 
\begin{itemize}
\item
{\bf defective} 
if $\deg {\sResP}_j (P, Q) < j$ or, equivalently, if ${\sRes}_j (P, Q)
= 0$, 
\item
{\bf non-defective} 
if $\deg {\sResP}_j (P, Q) =j$ or, equivalently, if ${\sRes}_j (P, Q)
\not= 0$.
\end{itemize}
\end{itemize}

\end{defn}

We refer the reader to \cite[Chapitre 9]{Ape_Jou_book} for another
definition of subresultant polynomials and coefficients, which differs possibly in a sign.

We illustrate Definitions \ref{def:subres} and \ref{def:subres2}
with the following example. 

\begin{example}\label{ex:example}
Let $\alpha, \beta \in \R$ and $P = X^5 + \alpha X + \beta \in \R[X]$, then $P' = 5X^4 + \alpha$ as in Example \ref {example0}. We have $\sResP_5(P, P')= P, \sResP_4(P, P')= P'$
$$
\begin{array}{lcccl}
\sResP_3(P, P') &=& \det 
\left(
\begin{array}{ccc}
1 & 0 & X^5 + \alpha X + \beta \\
0 & 5 & 5X^4 + \alpha \\
5 & 0 & 5X^5 + \alpha X
\end{array} 
\right)
&=&
-5(4\alpha X + 5\beta)\\[8mm]
\sResP_2(P, P')& = &\det 
\left(
\begin{array}{ccccc}
1 & 0 & 0       & 0       & X^6 + \alpha X^2 + \beta X^1 \\
0 & 1 & 0       & 0       &  X^5 + \alpha X + \beta \\
0 & 0 & 5       & 0       & 5X^4 + \alpha \\
0 & 5 & 0       & 0       & 5X^5 + \alpha X^1 \\
5 & 0 & 0       & 0       &  5X^6 + \alpha X^2 \\
\end{array} 
\right)
&=& 0,\\[14mm]
\sResP_1(P, P')& = &\det 
\left(
\begin{array}{ccccccc}
1 & 0 & 0 & 0 & \alpha & \beta   & X^7 + \alpha X^3 + \beta X^2 \\
0 & 1 & 0 & 0 & 0      & \alpha  & X^6 + \alpha X^2 + \beta X^1 \\
0 & 0 & 1 & 0 & 0      & 0       &  X^5 + \alpha X + \beta \\
0 & 0 & 0 & 5 & 0      & 0       & 5X^4 + \alpha \\
0 & 0 & 5 & 0 & 0      & 0       & 5X^5 + \alpha X^1 \\
0 & 5 & 0 & 0 & 0      & \alpha  &  5X^6 + \alpha X^2 \\
5 & 0 & 0 & 0 & \alpha & 0       &  5X^7 + \alpha X^3 \\
\end{array} 
\right)
&= &
80 \alpha^2(4\alpha X + 5\beta), 
\\[20mm]
\sResP_0(P, P') &= &\det 
\left(
\begin{array}{ccccccccc}
1 & 0 & 0 & 0 & \alpha & \beta & 0 & 0 & X^8 + \alpha X^4 + \beta X^3 \\
0 & 1 & 0 & 0 & 0 & \alpha & \beta & 0 & X^7 + \alpha X^3 + \beta X^2 \\
0 & 0 & 1 & 0 & 0 & 0 & \alpha & \beta & X^6 + \alpha X^2 + \beta X^1 \\
0 & 0 & 0 & 1 & 0 & 0 & 0 & \alpha &  X^5 + \alpha X + \beta \\
0 & 0 & 0 & 0 & 5 & 0 & 0 & 0 & 5X^4 + \alpha \\
0 & 0 & 0 & 5 & 0 & 0 & 0 & \alpha & 5X^5 + \alpha X^1 \\
0 & 0 & 5 & 0 & 0 & 0 & \alpha & 0 & 5X^6 + \alpha X^2 \\
0 & 5 & 0 & 0 & 0 & \alpha & 0 & 0 & 5X^7 + \alpha X^3 \\
5 & 0 & 0 & 0 & \alpha & 0 & 0 & 0 & 5X^8 + \alpha X^4 \\
\end{array} 
\right)
&= &256\alpha^5 + 3125\beta^4.
\end{array}
$$

Note that $\sResP_5(P, P')$ and $\sResP_4(P, P')$ are non-defective 
while
$\sResP_3(P, P')$ and 
$\sResP_2(P, P')$ are defective. Finally,
$\sResP_1(P, P')$ is defective if and only if $\alpha = 0$, and 
$\sResP_0(P, P')$ is defective if and only if $256\alpha^5 + 3125\beta^4= 0$.
\end{example}

The following Structure Theorem is 
a key result in the theory 
of subresultants, stating 
 the connection between subresultants and 
remainders.
To state it, we  need to introduce a notation. 

\begin{notn}\label{notn:epsilon}
 For $n \in \Z$, we denote
$\displaystyle{
 \epsilon_{n} = (-1)^{\frac12n(n-1)}.
}$
\end{notn}

Note that $\epsilon_n = 1$ if the remainder of $n$ in the division by $4$ is $0$ or $1$ and
$\epsilon_n = -1$ if the remainder of $n$ in the division by $4$ is $2$ or $3$; 
this implies that for $k \in \Z$
\begin{equation}\label{rem:epsilon_bis}
\epsilon_{2k+n} = (-1)^k\epsilon_{n} =\epsilon_{2k}\epsilon_{n}.
\end{equation}

\begin{theorem}[Structure Theorem of Subresultants]
\label{thm:structure_thm_subresultants} 
Let $P, Q
\in \D[X] \setminus \{0\}$ 
with $\deg P
 = p \ge 1$ and $\deg Q
  = q < p$.
Let 
$(d_0,\ldots,d_s)$ be
the sequence of degrees of the 
Sturm sequence of $P$ and 
$Q$ in decreasing order
and let 
$d_{-1} = p+1$ (note that $d_0 = p$ and $d_1 = q$). 
\begin{itemize}
\item For $1 \le i \le s$, 
$$
{\sResP}_{d_{i-1}-2}(P,Q) = \dots = 
{\sResP}_{d_{i} + 1}(P,Q) = 0 \in \D[X]
$$
and 
${\sResP}_{d_{i-1}-1}(P,Q)$   and
${\sResP}_{d_{i}}(P,Q)$  
 are proportional. More precisely,
for $1 \le i \le s$, denote
$$
\begin{array}{rcl}
T_i & = & {\sResP}_{d_{i-1}-1}(P,Q) \ \in \D[X],\\
    t_i & = & {\rm lc}(T_i) \ \in \D\\
\end{array}
$$
(note that $T_1 = Q$), and extend this notation with $T_0 = P$ and $t_{0}= 1 \in \D$
(even if $P$ is not monic).
Then
$$
{\sRes}_{d_{i}}(P,Q) \cdot T_i =  
t_i \cdot {\sResP}_{d_{i}}(P,Q) \in \D[X]
$$
with
\begin{equation}\label{eq:constant_STS_1}
{\sRes}_{d_{i}}(P,Q)
=
\epsilon_{d_{i-1}-d_i}\cdot
\frac{t_i^{d_{i-1}-d_i}}
{{\sRes}_{d_{i-1}}(P,Q)^{d_{i-1}-d_i-1}} \in \D.
\end{equation}
This implies $\deg T_i = d_i \le d_{i-1} - 1$.

\item For $1 \le i \le s-1$, 
\begin{equation}\label{eq:constant_STS_2}
t_{i-1} \cdot 
{\sRes}_{d_{i-1}}(P,Q)
\cdot T_{i+1}
=
-
{\rm Rem}
\left(
t_i \cdot 
{\sRes}_{d_{i}}(P,Q)
\cdot T_{i-1}
,
T_i 
\right) 
\in \D[X]
\end{equation}
(where ${\rm Rem}$ is the remainder 
in the euclidean division in ${\rm ff}(\D)[X]$ of the first polynomial 
by the second polynomial) and the quotient belongs to $\D[X]$.

\item Both $T_s \in \D[X]$ 
and ${\sResP}_{d_{s}}(P, Q) \in \D[X]$
are 
greatest common divisors of $P$ and $Q$ in ${\rm ff}(\D)[X]$ 
and they
divide  ${\sResP}_{j}(P, Q)$ for $0 \le j \le p$. 
In addition, if $d_s > 0$ then
$$
{\sResP}_{d_{s}-1}(P, Q) = \dots = {\sResP}_{0}(P, Q) = 0 \in \D[X].
$$
\end{itemize}
\end{theorem}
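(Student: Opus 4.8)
The plan is to prove the three items by induction on the length $s$ of the Sturm sequence, using as the main engine a \emph{reduction lemma} relating the subresultants of $P,Q$ of low index to those of $Q,S_2$, where $S_2=-\Rem(P,Q)$. The base case $s=1$ (so $S_2=0$ and $Q\mid P$) is essentially the content of Definition \ref{def:subres}: the intermediate subresultants between indices $p-1$ and $q$ vanish by convention, $T_1=\sResP_{p-1}(P,Q)=Q$, and the only genuine computation is that $\sResP_q(P,Q)$ is proportional to $Q$. This I would read off the determinant of Definition \ref{def:subres}: for $j=q$ the matrix consists only of shifts of the coefficient vector of $Q$, so it is (block) triangular and its determinant evaluates to an explicit power of $t_1=\mathrm{lc}(Q)$ times $Q$. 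Comparing its leading coefficient with \eqref{eq:constant_STS_1} for $i=1$ (where $d_0-d_1=p-q$ and $\sRes_{d_0}=1$) fixes the normalization.

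For the inductive step I would first establish the reduction lemma for all $0\le j\le q-1$. Performing row operations on the Sylvester-type matrix defining $\sResP_j(P,Q)$ — subtracting suitable $\D[X]$-multiples of the $Q$-rows from the $P$-rows, so that each $X^kP$-row becomes the corresponding $X^kS_2$-row — turns the matrix into one whose nonzero part is the matrix defining $\sResP_j(Q,S_2)$, up to a triangular block of $Q$-coefficients. Taking determinants yields, for every such $j$, an identity
\[
\sResP_j(P,Q)=\epsilon_{m}\, t_1^{\,e}\, \sResP_j(Q,S_2),
\]
with $m$ and $e$ explicit in terms of $p,q,j$. Since the Sturm sequence of $Q$ and $S_2$ is $(S_1,\dots,S_s)$, of length $s-1$, the inductive hypothesis describes the full subresultant structure of $Q,S_2$; under the reduction lemma, block $i'$ of $(Q,S_2)$ corresponds to block $i'+1$ of $(P,Q)$. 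Transporting the hypothesis through the identity above therefore gives at once the vanishing of the intermediate subresultants of $P,Q$ in every block $i\ge 2$, the proportionality of $T_i$ and $\sResP_{d_i}(P,Q)$, and — after simplification using \eqref{rem:epsilon_bis} — the coefficient formula \eqref{eq:constant_STS_1}. The block $i=1$ is exactly the base-case computation above.

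Once item one is in hand, the recurrence \eqref{eq:constant_STS_2} follows by rewriting the defining Euclidean relation $S_{i+1}=-\Rem(S_{i-1},S_i)$ in terms of the proportional polynomials $T_{i-1},T_i,T_{i+1}$: the proportionality constants supplied by item one are precisely the factors $t_{i-1}\sRes_{d_{i-1}}(P,Q)$ and $t_i\sRes_{d_i}(P,Q)$ appearing in \eqref{eq:constant_STS_2}, and the assertion that the quotient lies in $\D[X]$ reduces to checking that these normalizing factors clear exactly the denominators introduced by the division in $\K[X]$, which is again bookkeeping with leading coefficients. For item three I would use two elementary facts: expanding the determinant of Definition \ref{def:subres} along its last column exhibits every $\sResP_j(P,Q)$ as a $\D[X]$-combination $U_jP+V_jQ$, so any gcd of $P$ and $Q$ divides it; and $T_s$, being proportional to the last nonzero term $S_s$ of the remainder sequence, is itself a gcd in $\K[X]$. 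Combining these, each $\sResP_j(P,Q)$ is a multiple of $\sResP_{d_s}(P,Q)$, and for $j<d_s$ the degree bound $\deg\sResP_j(P,Q)\le j<d_s$ forces $\sResP_j(P,Q)=0$.

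The main obstacle is not the logical skeleton but the precise tracking of the signs $\epsilon_n$ and the powers of the leading coefficients $t_i$ and $\sRes_{d_i}(P,Q)$ through the row reductions and across the induction, so as to land exactly on formula \eqref{eq:constant_STS_1} and on the specific normalization in \eqref{eq:constant_STS_2}, rather than on a mere proportionality. The signs are controlled systematically through the identity \eqref{rem:epsilon_bis}, while the exponents are controlled by counting how many $Q$-rows are consumed in reducing each $P$-row in the reduction lemma; keeping these two computations consistent between the $(P,Q)$ and $(Q,S_2)$ levels is the delicate point.
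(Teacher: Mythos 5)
The paper does not actually prove this theorem: its ``proof'' consists of the single line ``See \cite[Chapter 8]{BPRbook}'', so there is nothing internal to compare your argument against. Your sketch --- a reduction lemma obtained by row operations on the defining matrices relating ${\sResP}_j(P,Q)$ to ${\sResP}_j(Q,S_2)$, induction on the length of the remainder sequence, and the cofactor representation ${\sResP}_j=U_jP+V_jQ$ for the gcd statement --- is essentially the standard argument carried out in that cited reference, and the skeleton is sound. Two points you wave off as bookkeeping are more than that, though: the exact normalization in \eqref{eq:constant_STS_2} does not follow from item one together with the Sturm relation $S_{i+1}=-\Rem(S_{i-1},S_i)$ alone, since item one relates $T_i$ to ${\sResP}_{d_i}(P,Q)$ rather than to $S_i$, so you must additionally determine the proportionality constants $T_i=\lambda_i S_i$ (or prove \eqref{eq:constant_STS_2} by transporting it through the reduction lemma, with a separate base computation for $i=1$); and the assertion that the quotient lies in $\D[X]$ rather than ${\rm ff}(\D)[X]$ is a genuine integrality statement that leading-coefficient cancellation alone does not give --- the pseudo-division bound leaves a spurious power of ${\sRes}_{d_{i-1}}(P,Q)$ in the denominator, and one needs the determinantal (cofactor) structure to remove it.
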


\begin{proof}{Proof:} See
\cite[Chapter 8]{BPRbook}.
\end{proof}

Note that Theorem \ref{thm:structure_thm_subresultants} (Structure Theorem of Subresultants) gives a method for computing the subresultant polynomials using remainders which is more efficient than using their definition as determinants. However we are not concerned with subresultant polynomials computations in the current paper. We are only concerned with a formula for the Cauchy index using the subresultant polynomials.

Theorem \ref{thm:structure_thm_subresultants}  (Structure Theorem of Subresultants) can be illustrated by the following picture.

{\small
$$\begin{array}{rl}
\rule{8cm}{0.4pt} & T_0=P=\sResP_{d_0}(P, Q) = \sResP_p(P, Q)\\
\rule{6.2cm}{0.4pt} &T_1=Q= \sResP_{d_0 - 1}(P, Q) = \sResP_{p-1}(P, Q)\\
0 & \\[-1mm]
\vdots  {\hspace{0.05cm} }& \\
0 & \\[-2mm]
\rule{6.2cm}{0.4pt} & \sResP_{d_1}(P, Q) = \sResP_{q}(P, Q) \\
\rule{4.2cm}{0.4pt}& T_2=\sResP_{d_1 - 1}(P, Q) = \sResP_{q-1}(P, Q)  \\
0 & \\ [-1mm]
\vdots {\hspace{0.05cm} } & \\[-2mm]
\vdots {\hspace{0.05cm} } & \\
0 & \\ [-2mm]
\rule{4.2cm}{0.4pt} & \sResP_{d_2}(P, Q) \\
\vdots {\hspace{0.05cm} } & \\[-2mm]
\vdots {\hspace{0.05cm} } & \\[-1mm]
\rule{1.3cm}{0.4pt}& T_s=\sResP_{d_{s-1} - 1}(P, Q)  \\
0 & \\ [-1mm]
\vdots {\hspace{0.05cm} } & \\[-2mm]
\vdots {\hspace{0.05cm} } & \\
0 & \\ [-2mm]
\rule{1.3cm}{0.4pt} & \sResP_{d_s}(P, Q)  \\
0 & \\ [-1mm]
\vdots {\hspace{0.05cm} } & \\
0 & \\ [-2mm]
\end{array}
$$
}

As a corollary to Theorem \ref{thm:structure_thm_subresultants},
all subresultant polynomials are either $0$ or proportional to polynomials in the Sturm sequence. More precisely, for $1 \le i \le s$,
the subresultant polynomial $T_i$ is proportional to $S_i$ in the Sturm sequence.

\begin{remark}\label{rem:non-defective}
In the case where all the subsresultant polynomials are 
non-defective, 
there are no pairs of proportional polynomials in the sequence of subresultant polynomials,
the degrees of the polynomials $S_i$ in the Sturm sequence decrease one by one, 
and the coefficient of proportionality between $T_i$ and $S_i$  is a square (see \cite[Corollary 8.37]{BPRbook}).
\end{remark}

\begin{example}
[Continuation of   Example \ref{example0} and Example \ref{ex:example}] \label{ex:example2}

Let us take as before $P=X^5+\alpha X+\beta$ and suppose $\alpha \ne 0$ and $256\alpha^5 + 3125\beta^4 \ne 0$.

Looking at 
Example \ref{ex:example}, we observe that, as expected given
the Structure Theorem,  the degrees of the non-defective subresultant polynomials are 
$d_0 =5, d_1 = 4, d_2 = 1$, $d_3 = 0$, i.e. the degrees of the polynomials in the Sturm sequence given in  Example \ref{example0}.
Moreover
$\sResP_2(P, P') = 0$, while $\sResP_3(P, P')$ is proportional to $\sResP_1(P, P')$ 
and, 
also, to $S_2$ given in  Example \ref{example0}.

Using the notation from Theorem \ref{thm:structure_thm_subresultants}, we have
$$
\begin{array}{lcl}
T_0 = \sResP_5(P, P') = P = X^5 + \alpha X + \beta, & & t_0 = 1, \\[1mm]
T_1 = \sResP_4(P, P') = P' = 5X^4 + \alpha, & & t_1 = 5, \\[1mm]
T_2 = \sResP_3(P, P') = -20\alpha X - 25 \beta, & & t_2 = -20\alpha, \\[1mm]
T_3 = \sResP_0(P, P') = 256\alpha^5 + 3125\beta^4, & & t_3 = 256\alpha^5 + 3125\beta^4. 
\end{array}
$$

\end{example}

\subsection{Main results}

In order to state our 
results we introduce the following notation.

\begin{notn}\label{notn:lead_non_def} 
Using the notation from Theorem \ref{thm:structure_thm_subresultants},
for  $0 \le i \le s$, let 
$$
{\rm p}(i) = \max\{j \ | \ 0 \le j \le i, \ d_{j-1} - d_{j} \hbox{ is odd}\}
$$
(${\rm p}(i)$ is  well-defined since $d_{-1} - d_0 = (p+1) - p = 1$ is odd). 
\end{notn}

We are ready now to state our main result, which is  a new formula 
for
${\rm Ind}_a^b\Big(\displaystyle{\frac{Q}{P}}\Big)$
using only the polynomials $T_i$ in the sequence of the subresultant polynomials.

\begin{theorem}\label{thm:main_thm}
Let
$a, b \in \R$ with $a < b$ 
and $P, Q
\in \R[X] \setminus \{0\}$ 
with $\deg P
 = p \ge 1$ and $\deg Q
  = q < p$. Then
$$
\Ind_a^b\Big(\frac{Q}{P}\Big) 
= 
\frac{1}{2}\sum_{0 \le i \le s-1} 
\epsilon_{d_{{\rm p}(i) - 1} - d_i}\cdot {\rm sign}(t_{{\rm p}(i)}) \cdot {\rm sign}(t_i) \cdot
\left(
\sign\Big( \frac{T_{i+1}}{T_{i}}, b \Big) 
-
\sign\Big( \frac{T_{i+1}}{T_{i}}, a \Big) 
\right).
$$
\end{theorem}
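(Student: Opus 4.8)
The plan is to deduce Theorem~\ref{thm:main_thm} from Theorem~\ref{thm:eiser} by replacing, inside the latter, each Sturm polynomial $S_i$ by the proportional subresultant polynomial $T_i$ and controlling the sign of the proportionality constant. By the proportionality noted after Theorem~\ref{thm:structure_thm_subresultants}, for $0\le i\le s$ there is a constant $c_i\in\R\setminus\{0\}$ with $T_i=c_iS_i$, and $c_0=c_1=1$ since $T_0=S_0=P$ and $T_1=S_1=Q$. First I would record a local computation: for $0\le i\le s-1$ none of $S_i,S_{i+1},T_i,T_{i+1}$ vanishes and $T_{i+1}/T_i=(c_{i+1}/c_i)(S_{i+1}/S_i)$; multiplying the canonical factorization $S_{i+1}/S_i=(X-x)^m\widetilde Q/\widetilde P$ by the nonzero constant $c_{i+1}/c_i$ does not change $m$, the monic denominator $\widetilde P$, or the coprimality, and only rescales $\widetilde Q$. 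Hence, for every $x\in\R$,
$$
\sign\Big(\frac{S_{i+1}}{S_i},x\Big)=\sign(c_ic_{i+1})\cdot\sign\Big(\frac{T_{i+1}}{T_i},x\Big),
$$
using $\sign(c_{i+1}/c_i)=\sign(c_ic_{i+1})$ and $\sign(c_ic_{i+1})^2=1$. Substituting this into Theorem~\ref{thm:eiser} reduces the whole statement to the single identity $\sign(c_ic_{i+1})=\epsilon_{d_{{\rm p}(i)-1}-d_i}\cdot\sign(t_{{\rm p}(i)})\cdot\sign(t_i)$.

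Next I would find a closed form for $c_ic_{i+1}$. Writing $r_i=\sRes_{d_i}(P,Q)$, I would compare the Sturm recurrence $S_{i+1}=-\Rem(S_{i-1},S_i)$ with \eqref{eq:constant_STS_2}, using that $\Rem$ is linear in its first argument and unchanged when its second argument is rescaled. This yields $c_{i+1}/c_{i-1}=t_ir_i/(t_{i-1}r_{i-1})$ for $1\le i\le s-1$, so the consecutive products $u_i:=c_ic_{i+1}$ satisfy $u_i/u_{i-1}=t_ir_i/(t_{i-1}r_{i-1})$ and telescope. Since $u_0=c_0c_1=1$ and $t_0=r_0=1$, this gives $c_ic_{i+1}=t_i\,\sRes_{d_i}(P,Q)$ for $0\le i\le s-1$, whence $\sign(c_ic_{i+1})=\sign(t_i)\,\sign(r_i)$ and it remains to prove $\sign(r_i)=\epsilon_{d_{{\rm p}(i)-1}-d_i}\,\sign(t_{{\rm p}(i)})$.

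This last identity I would prove by induction on $i$, extracting signs from \eqref{eq:constant_STS_1}. The base case $i=0$ holds directly, as $r_0=t_0=1$, ${\rm p}(0)=0$ and $\epsilon_{d_{-1}-d_0}=\epsilon_1=1$. For the step, set $\delta_i=d_{i-1}-d_i\ge 1$; then \eqref{eq:constant_STS_1} gives $\sign(r_i)=\epsilon_{\delta_i}\,\sign(t_i)^{\delta_i}\,\sign(r_{i-1})^{\delta_i-1}$. If $\delta_i$ is odd this collapses to $\sign(r_i)=\epsilon_{\delta_i}\sign(t_i)$ while ${\rm p}(i)=i$, which is exactly the claim. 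If $\delta_i$ is even it collapses to $\sign(r_i)=\epsilon_{\delta_i}\sign(r_{i-1})$ while ${\rm p}(i)={\rm p}(i-1)$, so the inductive hypothesis reduces the claim to $\epsilon_{\delta_i}\,\epsilon_{d_{{\rm p}(i)-1}-d_{i-1}}=\epsilon_{d_{{\rm p}(i)-1}-d_i}$; writing $\delta_i=2k$ and $n=d_{{\rm p}(i)-1}-d_{i-1}$, so that $d_{{\rm p}(i)-1}-d_i=2k+n$, this is precisely \eqref{rem:epsilon_bis}. Combining these reductions yields the formula of Theorem~\ref{thm:main_thm}.

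I expect the main obstacle to be the sign bookkeeping for $r_i$ and the exact role of ${\rm p}(i)$. The function ${\rm p}(i)$ records the most recent index at which an odd degree gap occurs, and only at such indices does the leading coefficient $t_{{\rm p}(i)}$ enter; each intervening even gap contributes solely through an $\epsilon$ factor. Checking that these factors accumulate to $\epsilon_{d_{{\rm p}(i)-1}-d_i}$ and not to some other sign is the delicate step, and it is exactly \eqref{rem:epsilon_bis}, applicable because the gap being absorbed is even, that makes the accumulation work.
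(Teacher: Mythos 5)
Your proof is correct. It reaches the formula by an essentially equivalent but differently organized route: instead of applying Corollary \ref{cor:main_CI} directly to $(T_0,\dots,T_s)$ after checking that it is a special $(\sigma,\tau)$-chain (which is what the paper does, reading $\sigma_i,\tau_i$ off equation \eqref{eq:constant_STS_2} and telescoping $\theta(\sigma,\tau)_i=\prod_{1\le j\le i}\sigma_j\tau_j$ down to $\sign(t_i)\cdot\sign(\sRes_{d_i}(P,Q))$), you invoke Theorem \ref{thm:eiser} for the Sturm sequence and transfer to the $T_i$ through the proportionality constants $T_i=c_iS_i$, whose consecutive products you compute as $c_ic_{i+1}=t_i\,\sRes_{d_i}(P,Q)$ by comparing the Sturm recurrence with \eqref{eq:constant_STS_2}. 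The two telescoping computations are constant-level and sign-level versions of the same identity, and your final induction establishing $\sign(\sRes_{d_i}(P,Q))=\epsilon_{d_{{\rm p}(i)-1}-d_i}\cdot\sign(t_{{\rm p}(i)})$ is exactly the paper's Lemma \ref{lem:sign_sres}, proved the same way from \eqref{eq:constant_STS_1} and \eqref{rem:epsilon_bis}. What your variant buys is that you never need to introduce the $(\sigma,\tau)$-chain structure on $(T_0,\dots,T_s)$ explicitly; what it costs is the extra remainder manipulation (linearity of $\Rem$ in its first argument, invariance under rescaling its second) needed to pin down $c_{i+1}/c_{i-1}$, a step the paper avoids by working with the $T_i$ from the start. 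Your supporting observations are all sound, in particular $\sign(\lambda F,x)=\sign(\lambda)\cdot\sign(F,x)$ for a nonzero constant $\lambda$, the base data $c_0=c_1=1$ and $t_0=\sRes_{d_0}(P,Q)=1$, and the case split on the parity of $d_{i-1}-d_i$ governing whether ${\rm p}(i)=i$ or ${\rm p}(i)={\rm p}(i-1)$.
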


As we will see in Section \ref{sec:comcon}, the main advantage of the formula in 
Theorem \ref{thm:main_thm} in comparison with previously known related formulas is that 
there is no assumption on the endpoints $a$ and $b$ of the interval, and, more importantly, 
potentially 
less subresultant polynomials (i.e. only te $T_i$) are involved.

If we add the condition that $a$ and $b$ 
are no roots of $P$ and $Q$, from Theorem \ref{thm:main_thm} we obtain a
\emph{sign-variation-counting} formula.

\begin{theorem}\label{thm:main_thm_variations}
Let
$a, b \in \R$ with $a < b$ 
and $P, Q
\in \R[X] \setminus \{0\}$ 
with $\deg P
 = p \ge 1$ and $\deg Q
  = q < p$.
If
$a$ and $b$ are not common roots of $P$ and $Q$, then
$$
\Ind_a^b\Big(\frac{Q}{P}\Big) 
= 
\sum_{0 \le i \le s-1} 
\epsilon_{d_{{\rm p}(i) - 1} - d_i}\cdot {\rm sign}(t_{{\rm p}(i)}) \cdot {\rm sign}(t_i) \cdot
{\rm Var}_a^b(T_i, T_{i+1}).$$
\end{theorem}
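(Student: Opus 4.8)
The plan is to derive Theorem \ref{thm:main_thm_variations} directly from Theorem \ref{thm:main_thm}, exactly as the paragraph before the statement announces (``from Theorem \ref{thm:main_thm} we obtain''). So the whole proof is a deduction, and the only real content is translating each term of the sign-difference formula in Theorem \ref{thm:main_thm} into a sign-variation difference, using the hypothesis that $a$ and $b$ are not common roots of $P$ and $Q$. The key tool is the already-established identity \eqref{signvar}: if $x$ is not a common root of $P$ and $Q$, then $\sign\big(\frac{Q}{P},x\big) = 1 - 2\,\V_x(P,Q)$.

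First I would fix an index $i$ with $0 \le i \le s-1$ and apply \eqref{signvar} to the pair $(T_i, T_{i+1})$ at the point $b$ and at the point $a$, aiming to rewrite
$$
\sign\Big(\frac{T_{i+1}}{T_i}, b\Big) - \sign\Big(\frac{T_{i+1}}{T_i}, a\Big)
$$
as $-2\big(\V_b(T_i, T_{i+1}) - \V_a(T_i, T_{i+1})\big) = 2\,\V_a^b(T_i,T_{i+1})$. For this substitution to be legal I must check that neither $a$ nor $b$ is a common root of $T_i$ and $T_{i+1}$; this is where the hypothesis enters. The clean way is to invoke the Structure Theorem (Theorem \ref{thm:structure_thm_subresultants}): each $T_i$ is a subresultant polynomial and is divisible by the gcd $T_s$ of $P$ and $Q$, while consecutive $T_i, T_{i+1}$ generate (over $\K$) the same ideal as $P, Q$ up to the gcd, so a common root of $T_i$ and $T_{i+1}$ would be a common root of $P$ and $Q$. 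I would state this as the one lemma-level observation needed: for every $i$, the common roots of $T_i$ and $T_{i+1}$ are exactly the common roots of $P$ and $Q$ (equivalently, the roots of $T_s$), so the assumption on $a,b$ guarantees \eqref{signvar} applies to every pair in the sum.

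Having justified the substitution, the computation is purely formal: the constant factor $\epsilon_{d_{{\rm p}(i)-1}-d_i}\cdot \sign(t_{{\rm p}(i)})\cdot\sign(t_i)$ in front of each bracket is unaffected, the factor $\tfrac12$ in Theorem \ref{thm:main_thm} cancels against the factor $2$ coming from \eqref{signvar}, and the sign flip turns $\V_b - \V_a$ into $\V_a - \V_b = \V_a^b$. Summing over $0 \le i \le s-1$ then yields precisely the claimed formula. I would write this out term by term once and then collect the sum.

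The main obstacle is not algebraic but verificational: ensuring that \eqref{signvar} is genuinely applicable to each pair $(T_i, T_{i+1})$ at both endpoints. The hypothesis only forbids $a,b$ from being \emph{common} roots of $P$ and $Q$, yet $a$ or $b$ could well be a root of an individual $T_i$ (indeed, of $P=T_0$ or $Q\sim T_1$). What must be ruled out is that such a point is a \emph{simultaneous} root of two consecutive $T_i$'s. I expect the cleanest argument to be the reduction ``common root of consecutive subresultants $\Rightarrow$ common root of $P,Q$'' via the divisibility and gcd statements in the third bullet of Theorem \ref{thm:structure_thm_subresultants}; once that is in place, everything else is a one-line substitution. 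I would therefore spend the bulk of the written proof making that reduction precise and treat the final summation as immediate.
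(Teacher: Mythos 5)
Your proposal is correct and follows essentially the same route as the paper: the paper likewise deduces the result from Theorem \ref{thm:main_thm} together with identity (\ref{signvar}), with the only substantive step being the observation (via the Structure Theorem and the three-term recurrences) that a common root of two consecutive $T_i,T_{i+1}$ is necessarily a common root of all the $T_j$, hence of $P$ and $Q$, so the hypothesis on $a,b$ licenses the substitution at every index. Your identification of that verification as the real content of the proof matches the paper exactly.
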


Finally, for the Cauchy index on $\R$, we obtain the following result. 

\begin{theorem}\label{thm:ind_all_R}
Let
$P, Q
\in \R[X] \setminus \{0\}$ 
with $\deg P
 = p \ge 1$ and $\deg Q
  = q < p$.
If the leading coefficient of $P$ is positive or if $d_0 -d_1 = p-q$ is even, then 
$$
{\rm Ind}_{\R}\Big( \frac{Q}{P} \Big) =
\sum_{0 \le i \le s-1, \atop  d_{i} - d_{i+1} {\scriptsize\hbox{odd}}} 
\epsilon_{d_{{\rm p}(i) - 1} - d_i}\cdot {\rm sign}(t_{{\rm p}(i)}) 
\cdot {\rm sign}(t_{i+1}). 
$$
If the leading coefficient of $P$ is negative and $d_0 - d_1 = p-q$ is odd, then 
$$
{\rm Ind}_{\R}\Big( \frac{Q}{P} \Big) =
- {\rm sign}(t_{1})
+ 
\sum_{1 \le i \le s-1, \atop  d_{i} - d_{i+1} {\scriptsize\hbox{odd}}} 
\epsilon_{d_{{\rm p}(i) - 1} - d_i}\cdot {\rm sign}(t_{{\rm p}(i)}) 
\cdot {\rm sign}(t_{i+1}). 
$$
\end{theorem}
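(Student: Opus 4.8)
The plan is to derive Theorem \ref{thm:ind_all_R} from Theorem \ref{thm:main_thm} by pushing the endpoints beyond all the roots of $P$. First I would choose $a, b \in \R$ with $a$ smaller than every real root of $P$ and $b$ larger than every real root of $P$; such $a, b$ exist since $P$ has finitely many roots, and I may shrink $a$ and enlarge $b$ further so that no $T_i$ vanishes at $a$ or $b$. Because neither $a$ nor $b$ is then a root of $P$, the boundary terms $\Ind_a^+(Q/P)$ and $\Ind_b^-(Q/P)$ vanish and every root of $P$ lies in $(a,b)$; hence $\Ind_a^b(Q/P) = \sum_{x \in \R} \Ind_x(Q/P) = \Ind_{\R}(Q/P)$. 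It therefore suffices to evaluate the right-hand side of Theorem \ref{thm:main_thm} at this choice of $a$ and $b$.

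The second step is to compute each factor $\sign(T_{i+1}/T_i, \cdot)$ at these extreme endpoints. Since $T_i$ has degree $d_i$ and leading coefficient $t_i$, for points beyond all its roots one has $\sign(T_i(b)) = \sign(t_i)$ and $\sign(T_i(a)) = (-1)^{d_i}\sign(t_i)$, so that $\sign(T_{i+1}/T_i, b) = \sign(t_{i+1})\sign(t_i)$ and $\sign(T_{i+1}/T_i, a) = (-1)^{d_i + d_{i+1}}\sign(t_{i+1})\sign(t_i)$. Consequently the bracketed difference in Theorem \ref{thm:main_thm} equals $\sign(t_i)\sign(t_{i+1})\,(1 - (-1)^{d_i + d_{i+1}})$, which vanishes when $d_i - d_{i+1}$ is even and equals $2\sign(t_i)\sign(t_{i+1})$ when $d_i - d_{i+1}$ is odd. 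For every index $i \ge 1$ this, together with $\sign(t_i)^2 = 1$, collapses the corresponding summand to $\epsilon_{d_{{\rm p}(i)-1}-d_i}\sign(t_{{\rm p}(i)})\sign(t_{i+1})$ for the odd-gap terms and to $0$ otherwise, i.e. exactly the generic summand appearing in both displayed formulas.

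The delicate point, which I expect to be the main obstacle, is the term $i = 0$, because the convention $t_0 = 1$ used in the Structure Theorem need not record the true leading coefficient $a_p = {\rm lc}(P)$ of $T_0 = P$. Here ${\rm p}(0) = 0$ since $d_{-1} - d_0 = 1$ is odd, so the coefficient multiplying the $i=0$ bracket is $\epsilon_{d_{-1}-d_0}\sign(t_0)^2 = 1$. However, evaluating $\sign(T_1/T_0, \cdot) = \sign(Q/P, \cdot)$ at the endpoints uses the actual leading term $a_pX^p$ of $P$, which gives $\sign(Q/P, b) = \sign(t_1)\sign(a_p)$ and $\sign(Q/P, a) = (-1)^{p-q}\sign(t_1)\sign(a_p)$. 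Thus the $i=0$ contribution is $0$ when $p - q$ is even and $\sign(t_1)\sign(a_p)$ when $p - q$ is odd, in contrast with the value $\sign(t_1)$ that the generic simplification of the previous paragraph would predict. This single extra factor $\sign(a_p)$ is precisely the source of the dichotomy in the statement.

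Finally I would assemble the cases. If ${\rm lc}(P) > 0$ then $\sign(a_p) = 1$ and the $i=0$ term coincides with the generic summand; if $p - q$ is even the $i=0$ term is absent on both sides. In either situation the total equals $\sum_{0 \le i \le s-1,\ d_i - d_{i+1}\,\text{odd}} \epsilon_{d_{{\rm p}(i)-1}-d_i}\sign(t_{{\rm p}(i)})\sign(t_{i+1})$, which is the first formula. In the remaining case ${\rm lc}(P) < 0$ with $p - q$ odd, the $i=0$ term equals $-\sign(t_1)$ while all terms $i \ge 1$ are unchanged, yielding the second formula. The only genuinely computational ingredients are the sign evaluations at the extremes, which are routine once the endpoints are taken past all roots; the whole conceptual care lies in isolating the $i=0$ term and tracking the convention $t_0 = 1$ against the true sign of ${\rm lc}(P)$.
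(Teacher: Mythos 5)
Your proposal is correct and follows essentially the same route as the paper: push the endpoints past all real roots so that $\Ind_a^b$ becomes $\Ind_{\R}$, reduce each summand of the interval formula to signs of leading coefficients (which kills the even-gap terms and collapses the odd-gap ones), and isolate the $i=0$ term where the convention $t_0=1$ diverges from $\sign({\rm lc}(P))$. The only cosmetic difference is that the paper routes the computation through Theorem \ref{thm:main_thm_variations} and a $\mathrm{Var}_{\pm\infty}$ notation, whereas you evaluate the $\sign(T_{i+1}/T_i,\cdot)$ terms of Theorem \ref{thm:main_thm} directly at the extreme endpoints; by identity (\ref{signvar}) these are the same computation.
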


\begin{example}[Continuation of Examples  \ref{example0}, \ref{ex:example} and \ref{ex:example2}] \label{ex:example3}
Following Notation \ref{notn:lead_non_def}, for $0 \le i \le 3$ we have ${\rm p}(i) = i$. Therefore, 
by Theorem \ref{thm:ind_all_R},
when we fix $(\alpha,\beta)\in \R^2$ with $\alpha \ne 0$ and $256\alpha^3 + 3125\beta^4 \ne 0$, the number of roots of $P$ in $\R$ is given by
$$
{\rm Ind}_{\R}\Big( \frac{P'}{P} \Big) = 1 - {\sign}(\alpha) + {\sign}(\alpha)\cdot {\sign}(256\alpha^5 + 3125\beta^4).
$$
\end{example}

The rest of the paper is organized as follows. 
In Section \ref{sec:comcon} we 
comment the differences between our results
and previously known related formulas.
In Section \ref{sec:properties_Ci} we review some useful properties of Cauchy index. 
In Section \ref{sec:sigma_tau}, we recall the notion of $(\sigma, \tau)$-chain and 
their connection with Cauchy index. 
Finally, in Section \ref{sec:proofmain} we prove Theorem \ref{thm:main_thm} 
using $(\sigma,\tau)$-chains, and Theorems \ref{thm:main_thm_variations} and \ref{thm:ind_all_R}
as consequences of Theorem \ref{thm:main_thm}.

\section{Comparison with previous Cauchy index 
formulas using subresultants}\label{sec:comcon}

There is a previously known formula 
for
 the Cauchy index 
$\displaystyle{{\rm Ind}_a^b\Big( \frac{Q}{P}\Big)}$
by means of subresultant 
polynomials which is as follows (see \cite[Chapter 9]{BPRbook}).

\begin{defn}
{\em
Let $s$ be
a finite sequence
of  $n$ elements in $\R$
of type 
$$
s=(s_n, \underbrace{0 , \ldots, 0}_{n-m-1}, \underbrace{ \textcolor{white}{0, \ldots} s' \textcolor{white}{0, \ldots} }_{m \hbox{\small{ elements}}},
)$$ 
with $s_n \not=0$ and
$s'$ a finite sequence
of $m$ elements in $\R$ with $0 \le m \le n-1$,
which is either empty (this is, $m= 0$) or 
$s'=(s_m,\ldots,s_1)$ with $s_m\not=0$.
The \textbf{modified number of sign variations}
\index{Number of sign variations} in $s$ is
defined inductively as follows
$$
 \W(s)=
\begin{cases}
 0 & \mbox{if } s'=\emptyset, \\
\W(s')+1&\mbox{if } s_ns_m<0,\\
\W (s')+2&\mbox{if }  s_ns_m>0 \mbox{ and }  n-m=3,\\
\W (s')&\mbox{if }   s_ns_m>0 \mbox{ and }  n-m\not=3.
\end{cases}
$$
In other words,
the usual definition of the number
of sign variations is modified by counting two sign variations
 for the groups: $+,0,0,+$ and
$-,0,0,-$.
If there are no zeros in the sequence $s$,
$\W(s)$
is just the classical number of sign variations in the sequence.

Let ${\cal P}$ be a sequence $(P_0,P_1,\ldots,P_d)$ 
of
polynomials in 
$\R[X]$
and let $x$ be an element of
$\R
$ which is not
a root of the $\gcd$ of $(P_0, P_1, \dots, P_d)$, which we call $\gcd({\cal P})$.
Then $\W({\cal P};x)$, the
 \textbf{modified number of sign variations}
of
${\cal P}$ at $x$, is the number defined as
 follows:
\begin{itemize}
\item[-] delete from ${\cal P}$ those polynomials
that are identically $0$
to obtain the sequence of polynomials
 ${\cal Q}=(Q_0,\cdots,Q_s)$ in $\D[X]$,
\item[-] define $\W({\cal P};x)$ as
$\W(Q_0(x),\cdots,Q_s(x))$.
\end{itemize}
Let  $a$ and $b$ be elements of
$\R$ which are not
roots of $\gcd({\cal P})$.
The difference
between the number of modified sign variations in
${\cal P}$ at $a$ and $b$
is denoted by
\[\W({\cal P};a,b)=\W({\cal P};a)-\W({\cal P};b).\]
}
\end{defn}

Denoting by $\SR(P,Q)$ the list of subresultant polynomials of $P$ and $Q$, 
the
following result is known (see \cite[Chapter 9]{BPRbook}).

\begin{proposition}
\label{9:the:sthatheoremn}
Let $a, b \in \R$ with $a < b$ and $P, Q \in \R[X] \setminus \{0\}$ with $\deg P = p \ge 1$
and $\deg Q = q < p$. If $a$ and $b$ are not roots of $P$, then
$$
\Ind_a^b \Big(\displaystyle{\frac{Q}P}\Big) = \W(\SR(P,Q);a,b).
$$
 \end{proposition}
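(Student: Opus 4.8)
The plan is to derive the proposition from the Sturm-sequence formula of Theorem~\ref{thm:eiser_variations}, reducing the statement to a purely combinatorial comparison of two sign-variation counts. Since $a$ and $b$ are not roots of $P$, they are in particular not common roots of $P$ and $Q$, so Theorem~\ref{thm:eiser_variations} gives
$$
\Ind_a^b\Big(\frac{Q}{P}\Big) = \sum_{0 \le i \le s-1} \V_a^b(S_i, S_{i+1}).
$$
Moreover $\gcd(\SR(P,Q))$ divides $P = \sResP_p(P,Q)$, so any root of it is a root of $P$; as $P(a)$ and $P(b)$ are nonzero, $a$ and $b$ are not roots of $\gcd(\SR(P,Q))$ and $\W(\SR(P,Q);a,b)$ is well-defined. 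It then suffices to prove
$$
\W(\SR(P,Q);a,b) = \sum_{0 \le i \le s-1} \V_a^b(S_i, S_{i+1}),
$$
an identity in which the Cauchy index no longer appears.

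To attack it I would first use the Structure Theorem~\ref{thm:structure_thm_subresultants} to record the exact shape of $\SR(P,Q)$. Writing $\delta_i = d_{i-1} - d_i$, the nonzero entries, read by decreasing index, are $T_0 = P$ and, for each $1 \le i \le s$, the proportional pair $T_i = \sResP_{d_{i-1}-1}(P,Q)$ and $\sResP_{d_i}(P,Q)$; these coincide when $\delta_i = 1$ and are otherwise separated by exactly $\delta_i - 2$ identically-zero subresultants, with no zeros between $\sResP_{d_i}(P,Q)$ and the $T_{i+1}$ opening the next block. Each $T_i$ is proportional to the Sturm polynomial $S_i$, and within a block $\sResP_{d_i}(P,Q) = (\sRes_{d_i}(P,Q)/t_i)\,T_i$, so \eqref{eq:constant_STS_1} fixes the sign relating the two copies in terms of $\epsilon_{\delta_i}$ and the signs of $t_i$ and $\sRes_{d_{i-1}}(P,Q)$.

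The heart of the argument is then a block-by-block comparison at each endpoint $x \in \{a,b\}$. A non-defective block ($\delta_i = 1$) contributes a single entry proportional to $S_i$ and is matched directly. A defective block ($\delta_i \ge 2$) contributes the two proportional copies $T_i, 0, \dots, 0, \sResP_{d_i}(P,Q)$, and one must check that the modified count $\W$ across these entries, together with the transitions into and out of the block, reproduces the sign variations attached to $S_i$ in the Sturm sequence. This is exactly where the modification built into $\W$---counting $+,0,0,+$ and $-,0,0,-$ as two variations---does its work: it is calibrated to the blocks with two interior zeros, i.e.\ $\delta_i = 4$, and, combined with the sign flip $\sRes_{d_i}(P,Q)/t_i$ between the two copies, it absorbs the extra intra-block transition that has no counterpart in the Sturm sequence. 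A further case is when $x$ is a root of some intermediate $S_i$ (permitted here, since only roots of $P$ are excluded): both copies then vanish at $x$, lengthening the zero-run between the nonzero neighbors $\propto S_{i-1}$ and $\propto S_{i+1}$, which have opposite signs by the Sturm sign-alternation property, so both counts still agree on that block. The main obstacle is precisely this sign bookkeeping across defective blocks: confirming, uniformly in the parity of every degree drop $\delta_i$, that the modified-counting convention compensates simultaneously for the intervening zeros and for the sign of $\sRes_{d_i}(P,Q)/t_i$ dictated by \eqref{eq:constant_STS_1}. This is the same combinatorial content that Theorem~\ref{thm:main_thm} later repackages as explicit $\epsilon$- and $\sign(t)$-factors, and it is where I expect the verification to be most delicate.
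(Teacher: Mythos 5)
A preliminary remark on the comparison you were asked to beat: the paper does not prove Proposition \ref{9:the:sthatheoremn} at all. It is quoted from \cite[Chapter 9]{BPRbook} precisely as the known result that the new theorems are meant to improve upon, and the authors explicitly call that proof ``cumbersome''. So there is no internal argument to match your proposal against; the only question is whether your plan would actually yield a proof. Your opening reduction is sound: since $a,b$ are not roots of $P$ they are not common roots of $P$ and $Q$, Theorem \ref{thm:eiser_variations} applies, and $\gcd(\SR(P,Q))$ divides $P$, so $\W(\SR(P,Q);a,b)$ is well defined. But the remaining identity $\W(\SR(P,Q);a,b)=\sum_i{\rm Var}_a^b(S_i,S_{i+1})$ is the entire content of the proposition, and your sketch of it is not a proof.

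Two concrete problems. First, you have misread where the modified counting rule acts: in the definition of $\W({\cal P};x)$ the identically zero polynomials are deleted \emph{before} evaluation, so the $\delta_i-2$ zero subresultants inside a defective block never enter the count, and the $+,0,0,+$ rule is not ``calibrated to $\delta_i=4$''. The runs of exactly two zeros it is designed for arise instead when the evaluation point is a root of an intermediate $T_i$, so that the proportional pair $T_i$, $\sResP_{d_i}(P,Q)$ both vanish in the reduced sequence. Second, a block-by-block comparison ``at each endpoint'' cannot show that the two counts reproduce each other, because they do not agree pointwise: in the paper's own example with $\alpha=1$, $\beta=0$, the reduced subresultant sequence at $x=1$ evaluates to $(2,6,-20,320,256)$, giving $\W=2$, while $\sum_i{\rm Var}_1(S_i,S_{i+1})=1$. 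Only the differences between $a$ and $b$ agree, so what you must actually prove is that the discrepancy contributed by each defective block is \emph{independent of the evaluation point}. That forces exactly the case analysis you defer: the sign of $\sRes_{d_i}(P,Q)/t_i$ from equation (\ref{eq:constant_STS_1}), the parity of each $\delta_i$, and the degenerate case where the endpoint is a root of some $T_i$ --- where, note, your appeal to ``opposite signs by the Sturm sign-alternation property'' is invalid for the subresultant sequence: alternation holds for the $S_i$, while $T_{i-1}(x)$ and $T_{i+1}(x)$ are related by the factor $-\sign(a_ic_i)$, which can have either sign. Until that bookkeeping is carried out, the proof has a hole precisely where the whole difficulty lies.
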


Our new formula 
for
${\rm Ind}_a^b \Big(\displaystyle{\frac{Q}P}\Big)$ given in Theorem \ref{thm:main_thm} 
improves on the one from Proposition \ref{9:the:sthatheoremn} in several aspects: 
 
\begin{itemize}
\item[a.i)] Theorem \ref{thm:main_thm} is general,
 there are no restrictions on   $a$ and $b$.
 
\item[a.ii)] 
More importantly, there are cases when
less subresultant polynomials are involved in this new formula. 
The Structure Theorem
of Subresultants (Theorem \ref{thm:structure_thm_subresultants} ) states that in the subresultant polynomial
sequence, some polynomials appear only once and other polynomials appear exactly twice  (up to scalar multiples). 
In addition, if a polynomial appears twice, its first appearance, $T_i$, is defined as the polynomial determinant of  
a matrix of smaller size (in comparison with its second appearance), so that it is more 
suitable in computations.
Our formula involves only the $T_i$, i.e. the first appearance( up to scalar multiples) of each polynomial
in the subresultant
polynomial sequence.
\end{itemize}

In the special case when $a$ and $b$ are not common roots of $P$ and $Q$, 
Theorem \ref{thm:main_thm_variations} gives a \emph{sign-variation-counting} formula which
improves on the one from Proposition \ref{9:the:sthatheoremn}
since:
\begin{itemize}
\item[b.i)] Theorem \ref{thm:main_thm_variations} imposes less restrictions on $a$ and $b$. 
\item[b.ii)] 
As in  a.ii). 
\item[b.iii)] 
The formula
is more natural, since the sign-variation counting in Theorem \ref{thm:main_thm_variations} is local and needs only to consider the sign of two consecutive elements, contrarily to the modified number of sign variations
 which is very counter-intuitive. 
\end{itemize}

Last but not least, the proofs of our results are  also less technically involved 
than the proof of Proposition \ref{9:the:sthatheoremn},
which is cumbersome
(see the proof in \cite[Chapter 9]{BPRbook}).

Note that, in the particular case where all subresultant polynomials are non-defective, 
both the formulas in Theorem \ref{thm:main_thm_variations} and in Proposition 
\ref{9:the:sthatheoremn} become
$$
\Ind_a^b\Big(\frac{Q}{P}\Big) 
= 
\sum_{0 \le j \le p-1} 
{\rm Var}_a^b({\sResP}_{j}, {\sResP}_{j+1})$$
(see \cite[Chapters 2 and 9]{BPRbook}), 
but the new formula extends the previous one to the case 
whera $a$ and $b$ are not common roots of $P$ and $Q$.

There is also a previously known formula 
for
the Cauchy index ${\rm Ind}_{\R}\Big( \displaystyle{\frac{Q}{P}}\Big)$ by means of 
subresultant coefficients
which we introduce below
(see \cite[Chapter 4]{BPRbook}).

\begin{proposition}\label{oldnondef} Using the notation from Theorem \ref{thm:structure_thm_subresultants},
for $0 \le i \le s$, let $s_i={\sRes}_{d_i} (P, Q)$ be the leading coefficient of 
the non-defective subresultant polynomial ${\sResP}_{d_i} (P, Q)$ (which is 
proportional to $T_i$). Then
 $$
{\rm Ind}_{\R}\Big( \frac{Q}{P} \Big) =
\sum_{0 \le i \le s-1, \atop  d_{i} - d_{i+1} {\scriptsize\hbox{odd}}} 
\epsilon_{d_{i} - d_{i+1}}\cdot {\rm sign}(s_{i}) 
\cdot {\rm sign}(s_{i+1}).
$$
\end{proposition}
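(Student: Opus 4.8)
The plan is to derive Proposition~\ref{oldnondef} as a special case of our main formula, Theorem~\ref{thm:main_thm}, rather than proving it from scratch. The key observation is that Proposition~\ref{oldnondef} is a statement about $\Ind_{\R}$, while Theorem~\ref{thm:main_thm} is about $\Ind_a^b$. To bridge this, I would first establish that for $a$ sufficiently small and $b$ sufficiently large (more precisely, taking $a \to -\infty$ and $b \to +\infty$, or choosing $a$ below all real roots of $P$ and $b$ above all of them), one has $\Ind_{\R}\big(\frac{Q}{P}\big) = \Ind_a^b\big(\frac{Q}{P}\big)$, since no roots of $P$ lie outside $[a,b]$ and the contributions $\Ind_a^+$ and $\Ind_b^-$ vanish when $a, b$ are not roots of $P$. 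Under the degree assumption $\deg Q = q < \deg P = p$, the rational function tends to $0$ at $\pm\infty$, so this identification is clean.

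Second, I would evaluate the sign terms $\sign\big(\frac{T_{i+1}}{T_i}, b\big)$ and $\sign\big(\frac{T_{i+1}}{T_i}, a\big)$ appearing in Theorem~\ref{thm:main_thm} at these extreme values. At large $|x|$, the sign of $\frac{T_{i+1}}{T_i}$ is governed by the leading terms, hence by $\sign(t_{i+1})\sign(t_i)$ together with the parity of $\deg T_{i+1} - \deg T_i = d_{i+1} - d_i$ (using $\deg T_i = d_i$ from the Structure Theorem). Specifically, at $b$ large and positive the sign is $\sign(t_{i+1})\sign(t_i)$, while at $a$ large and negative it is $(-1)^{d_i - d_{i+1}}\sign(t_{i+1})\sign(t_i)$. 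Substituting these into the formula of Theorem~\ref{thm:main_thm}, the difference $\sign(\cdots,b) - \sign(\cdots,a)$ becomes $\big(1 - (-1)^{d_i - d_{i+1}}\big)\sign(t_{i+1})\sign(t_i)$, which vanishes when $d_i - d_{i+1}$ is even and equals $2\sign(t_{i+1})\sign(t_i)$ when $d_i - d_{i+1}$ is odd. This already explains the restriction of the sum to indices with $d_i - d_{i+1}$ odd and cancels the factor $\frac12$.

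Third, I must reconcile the coefficient $\epsilon_{d_{{\rm p}(i)-1} - d_i}\cdot\sign(t_{{\rm p}(i)})\cdot\sign(t_i)$ coming from Theorem~\ref{thm:main_thm} with the coefficient $\epsilon_{d_i - d_{i+1}}\cdot\sign(s_i)\cdot\sign(s_{i+1})$ in Proposition~\ref{oldnondef}. This is the heart of the matter and the step I expect to be the main obstacle. I would use the relation ${\sRes}_{d_i}(P,Q)\cdot T_i = t_i\cdot{\sResP}_{d_i}(P,Q)$ from the Structure Theorem, which gives $\sign(s_i) = \sign({\sRes}_{d_i}(P,Q))$ in terms of $\sign(t_i)$ and the proportionality; more usefully, the recursive formula \eqref{eq:constant_STS_1} expresses ${\sRes}_{d_i}(P,Q)$ in terms of $\epsilon_{d_{i-1}-d_i}$, $t_i$, and ${\sRes}_{d_{i-1}}(P,Q)$. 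Iterating this recursion and carefully tracking signs of the $\epsilon$ and $t$ factors, together with the definition of ${\rm p}(i)$ as the last index up to $i$ where the degree gap is odd, should show that the two coefficient expressions agree on the relevant indices. The delicate bookkeeping is that $\sign(s_i)\sign(s_{i+1})$ telescopes across the recursion, and the parity structure encoded by $\epsilon$ together with the function ${\rm p}$ is exactly what collapses this telescoping product into the stated form; verifying that the $\epsilon$-exponents and the sign contributions of the $t_j$ for ${\rm p}(i) < j \le i$ (where the degree gaps are even) conspire correctly will require the identity \eqref{rem:epsilon_bis} and a short induction on $i$.

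Finally, I would remark that Proposition~\ref{oldnondef} as stated does not separate the two cases (positive versus negative leading coefficient of $P$) that appear in Theorem~\ref{thm:ind_all_R}; this suggests that Proposition~\ref{oldnondef} implicitly assumes a normalization, or that the two formulations coincide once the sign of $t_0 = 1$ and the behaviour at $-\infty$ are accounted for. I would verify consistency by checking that the boundary term $-\sign(t_1)$ appearing in the negative-leading-coefficient case of Theorem~\ref{thm:ind_all_R} is absorbed correctly, confirming that Proposition~\ref{oldnondef} is genuinely a corollary of our results and not an independent statement requiring separate proof.
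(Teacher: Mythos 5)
First, a point of comparison: the paper does not actually prove Proposition \ref{oldnondef}; it is quoted from \cite[Chapter 4]{BPRbook}, where it is established directly from the subresultant coefficients (a permanences-minus-variations argument), deliberately avoiding the Structure Theorem. So your plan of deducing it from Theorem \ref{thm:main_thm} is necessarily a different route --- in fact it is essentially the paper's own proof of Theorem \ref{thm:ind_all_R}, specialized. Your steps 1--3 do go through for the indices $i \ge 1$: Lemma \ref{lem:sign_sres} gives $\sign(s_i) = \epsilon_{d_{{\rm p}(i)-1}-d_i}\cdot\sign(t_{{\rm p}(i)})$, and formula (\ref{eq:constant_STS_1}) gives $\sign(t_{i+1}) = \epsilon_{d_i-d_{i+1}}\cdot\sign(s_{i+1})$ whenever $d_i - d_{i+1}$ is odd, so each such term becomes $\epsilon_{d_i-d_{i+1}}\cdot\sign(s_i)\cdot\sign(s_{i+1})$ as desired; the restriction to odd degree gaps and the cancellation of the $\frac12$ are also correct.

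The genuine gap is exactly the step you defer to the end. In your evaluation of the signs at $\pm\infty$ you use $\sign(t_i)\sign(t_{i+1})$, but for $i=0$ the leading coefficient of $T_0 = P$ is $a_p$, not $t_0 = 1$. The correct $i=0$ contribution (when $p-q$ is odd) is $\sign(a_p)\cdot\sign(t_1)$, whereas the $i=0$ term of Proposition \ref{oldnondef} is $\epsilon_{p-q}\cdot\sign(s_0)\cdot\sign(s_1) = \epsilon_{p-q}\cdot 1\cdot\epsilon_{p-q}\sign(t_1) = \sign(t_1)$, because $s_0 = {\sRes}_p(P,Q) = 1$ by the paper's convention regardless of the sign of $a_p$. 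These differ by the factor $\sign(a_p)$, and the discrepancy is not absorbed anywhere: for $P = -X$, $Q = 1$ one has $\Ind_{\R}\big(\frac{Q}{P}\big) = -1$, while the right-hand side of Proposition \ref{oldnondef} evaluates to $\epsilon_1\cdot\sign(1)\cdot\sign(1) = +1$. So the consistency check announced in your last paragraph fails; your derivation establishes the proposition only when ${\rm lc}(P) > 0$ or $p-q$ is even --- precisely the dichotomy that forces Theorem \ref{thm:ind_all_R} to have two cases. To close the argument you must either add that hypothesis (or a normalization of ${\sRes}_p$ carrying the sign of ${\rm lc}(P)$, as in the source), or prove the statement by the direct coefficient-based argument of \cite{BPRbook} rather than as a corollary of Theorem \ref{thm:main_thm}.
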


Even in this special case, the new formula 
for
${\rm Ind}_a^b\Big( \displaystyle{\frac{Q}{P}}\Big)$ given in 
Theorem \ref{thm:ind_all_R}
improves on the one from Proposition \ref{oldnondef}. 
As before, the main difference between the two formulas is that  the $t_i$ are, 
in the defective cases, defined as determinants of matrices of smaller sizes
than the $s_i$ and therefore 
 is more 
suitable in computations.
On the other hand, one advantage of Proposition \ref{oldnondef} is that it can be proved directly, using  only 
subresultant coefficients
and does not use the  definition of the subresultant polynomials and the Structure Theorem of subresultants (see \cite[Chapter 4]{BPRbook}).

\section{Properties of Cauchy index}\label{sec:properties_Ci}

In this section we include some useful properties of Cauchy index. 

\begin{lemma}\label{lem:mult_constant}
Let $a, b \in \R$ with
$a < b$, $P, Q \in \R[X] \setminus \{0\}$ and $c \in \R \setminus \{0\}$. 
Then 
$$\Ind_a^b\Big(\frac{c \cdot Q}{P}\Big)
= \sign(c) \cdot \Ind_a^b\Big(\frac{Q}{P}\Big).
$$
\end{lemma}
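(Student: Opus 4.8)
The plan is to reduce everything to the pointwise definition of the Cauchy index and observe that multiplying $Q$ by a nonzero constant $c$ scales the local index contributions by exactly $\sign(c)$. First I would unwind the definition of $\Ind_a^b$ into a sum of local indices $\Ind_x$ (together with the one-sided boundary terms $\Ind_a^+$ and $\Ind_b^-$), so that it suffices to prove the corresponding local identity
$$
\Ind_x^{\varepsilon}\Big(\frac{c\cdot Q}{P}\Big) = \sign(c)\cdot \Ind_x^{\varepsilon}\Big(\frac{Q}{P}\Big)
$$
for every $x \in \R$ and every $\varepsilon \in \{+,-\}$; summing and taking the difference of one-sided contributions then gives the result for the whole interval by linearity.

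Next I would examine how the canonical factorization from Definition \ref{defn:CI_at_a_point} behaves under multiplication by $c$. Writing $\frac{Q}{P} = (X-x)^m \frac{\widetilde Q}{\widetilde P}$ with $\widetilde P$ monic, $\widetilde P, \widetilde Q$ coprime and nonvanishing at $x$, the key observation is that $\frac{c\cdot Q}{P} = (X-x)^m \frac{c\cdot\widetilde Q}{\widetilde P}$, and this is still the required canonical form: multiplying $\widetilde Q$ by the nonzero constant $c$ preserves monicity of $\widetilde P$, preserves coprimality, preserves the nonvanishing conditions at $x$, and in particular does not change the integer $m$. Hence the exponent $m$ is identical for $\frac{Q}{P}$ and $\frac{c\cdot Q}{P}$, so the two functions are poles at $x$ (i.e. $m<0$) simultaneously, and the sign and parity factors in Definition \ref{defn:CI_at_a_point} match except for the sign of the evaluated rational function.

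The one quantity that does change is the sign factor: since $\sign\big(\frac{c\cdot\widetilde Q(x)}{\widetilde P(x)}\big) = \sign(c)\cdot\sign\big(\frac{\widetilde Q(x)}{\widetilde P(x)}\big)$, each local contribution $\Ind_x^{\varepsilon}$ simply picks up the factor $\sign(c)$, while the factor $\frac12(-1)^m$ (present in the $\varepsilon = -$ case) is untouched because $m$ is unchanged. This establishes the local identity in both the pole case and the trivial case (where both sides are $0$). I would then note the edge case $Q = 0$ separately, where $cQ = 0$ as well and both indices are defined to be $0$, so the identity holds vacuously.

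I do not expect any serious obstacle here; the only point requiring a little care is verifying that the canonical factorization genuinely transports under scaling by $c$ — specifically that coprimality and the normalization "$\widetilde P$ monic" are preserved, which they are because $c$ is a unit in $\R$ and affects only $\widetilde Q$. Everything else is bookkeeping: the factor $\sign(c)$ is common to every term $\Ind_x^{+}$, $\Ind_x^{-}$, $\Ind_a^{+}$, and $\Ind_b^{-}$, so it factors out of the entire defining sum for $\Ind_a^b$, yielding the claimed equality.
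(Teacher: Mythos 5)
Your argument is correct and is exactly the one the paper has in mind: its proof of this lemma is the single line ``follows immediately from the definition of the Cauchy index,'' and your proposal simply spells out that verification (the canonical factorization keeps the same $m$ and monic $\widetilde P$, so only the sign factor $\sign\big(\widetilde Q(x)/\widetilde P(x)\big)$ picks up $\sign(c)$, which then factors out of every local contribution). No gaps; the level of detail you supply is more than the paper gives.
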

\begin{proof}{Proof:} Follows immediately from the definition of Cauchy index.  
\end{proof}

\begin{lemma}\label{lem:reduce}
Let $a, b \in \R$ with
$a < b$, $P, Q, R \in \R[X] \setminus \{0\}$ and $T \in \R[X]$
such that
$$
Q = P T + R.
$$
Then 
$$\Ind_a^b\Big(\frac{Q}{P}\Big)
= \Ind_a^b\Big(\frac{R}{P}\Big).
$$
\end{lemma}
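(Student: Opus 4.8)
The plan is to reduce the global statement to a pointwise one and then simply read off the data that defines the Cauchy index at each point. Since both $\Ind_a^b(Q/P)$ and $\Ind_a^b(R/P)$ are assembled from the local quantities $\Ind_x^+$, $\Ind_x$ and $\Ind_x^-$ through the definition of the Cauchy index on $[a,b]$, it suffices to prove the pointwise identity $\Ind_x^\varepsilon(Q/P) = \Ind_x^\varepsilon(R/P)$ for every $x \in \R$ and every $\varepsilon \in \{+,-\}$; summing over $x \in (a,b)$ and applying it at the endpoints then gives the lemma at once. The guiding observation is that $Q = PT + R$ rewrites as $Q/P = T + R/P$ with $T \in \R[X]$, so the two rational functions differ by a polynomial, and the whole point is that adding a polynomial can neither create, destroy, nor alter a jump at a pole. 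The proof just makes this precise via the order of a rational function at $x$.

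Fixing $x$, I would take the canonical representation $R/P = (X-x)^m \widetilde R/\widetilde P$ of Definition \ref{defn:CI_at_a_point}, with $\widetilde P$ monic, $\widetilde R,\widetilde P$ coprime and $\widetilde R(x),\widetilde P(x)\neq 0$, and split on the sign of $m$. If $m \ge 0$, then $x$ is not a pole of $R/P$ and both one-sided indices of $R/P$ vanish by definition; since $T$ is a polynomial, its order at $x$ is non-negative, so $Q/P = T + R/P$ still has non-negative order at $x$, $x$ is not a pole of $Q/P$ either, and $\Ind_x^\varepsilon(Q/P)=0$ as well.

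The substantive case is $m < 0$, where $x$ is a genuine pole. Here I would write
$$\frac{Q}{P} = (X-x)^m \cdot \frac{(X-x)^{-m}\,T\,\widetilde P + \widetilde R}{\widetilde P},$$
and set $\widehat Q = (X-x)^{-m}T\widetilde P + \widetilde R$. Because $-m > 0$, the first summand is divisible by $X-x$, so $\widehat Q(x) = \widetilde R(x) \neq 0$ while $\widetilde P(x)\neq 0$; hence the exponent of $X-x$ in $Q/P$ is again exactly $m$, the residual factor is nonzero at $x$, and $\sign(\widehat Q(x)/\widetilde P(x)) = \sign(\widetilde R(x)/\widetilde P(x))$. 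Since the defining formula for $\Ind_x^\varepsilon$ depends only on the integer $m$ and on this sign, the two indices coincide.

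The main obstacle, minor but worth stating carefully, is that $\widehat Q$ and $\widetilde P$ need not be coprime, so $\widehat Q/\widetilde P$ is not yet the canonical representation demanded by the definition. I would resolve this by noting that passing to lowest terms (and normalizing the denominator to be monic) only cancels a common factor that divides $\widetilde P$ and therefore does not vanish at $x$; consequently neither the exponent $m$ nor the sign of the leading value at $x$ is affected. This confirms the pointwise identity in every case, and hence the lemma.
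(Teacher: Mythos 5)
Your proof is correct and follows essentially the same route as the paper's: both reduce the lemma to the pointwise identity $\Ind_x^{\varepsilon}\big(\frac{Q}{P}\big)=\Ind_x^{\varepsilon}\big(\frac{R}{P}\big)$ and observe that adding the polynomial $T$ to $(X-x)^{m}\frac{\widetilde R}{\widetilde P}$ with $m<0$ alters the numerator only by a multiple of $(X-x)^{-m}$, which vanishes at $x$ and hence leaves both $m$ and the sign of the value at $x$ unchanged. The only cosmetic differences are the direction of the substitution (you pass from $R/P$ to $Q/P$, the paper from $Q/P$ to $R/P$) and your explicit handling of the $m\ge 0$ case and of the coprimality normalization, which the paper leaves implicit.
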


\begin{proof}{Proof:} For each $x \in [a, b]$, we first note that if
$$
\frac{Q}{P} = (X - x)^{m}\frac{\widetilde{Q}}{\widetilde{P}}
$$
with $m\in \Z$, $\widetilde{P}(x) \ne 0, \widetilde{Q}(x) \ne 0$  and $m<0$,
then defining $$\widetilde{R}=\widetilde{Q}-(X-x)^{-m} \widetilde{P}T,$$
we have
$$
\frac{R}{P} = (X - x)^{m}\frac{\widetilde{R}}{\widetilde{P}}
$$
with $ \widetilde{P}(x) \ne 0$ and $\widetilde{R}(x)=\widetilde{Q}(x) \ne 0$.
This proves that ${\rm Ind}_x^\varepsilon \Big( \displaystyle{\frac{Q}{P} }\Big) =
{\rm Ind}_x^\varepsilon \Big( \displaystyle{\frac{R}{P} }\Big)$
for every $\varepsilon\in \{-1,1\}$. 
The claim follows from the definition of the Cauchy index.
\end{proof}

The following property is known as \emph{the inversion formula}.

\begin{proposition}\label{prop:inv_formula}
Let $a, b \in \R$ with
$a < b$ and $P, Q \in \R[X] \setminus \{0\}$. 
Then 
$$
\Ind_a^b\Big(\frac{Q}{P}\Big) + 
\Ind_a^b\Big(\frac{P}{Q}\Big)
=  \frac12\sign\Big(\frac{Q}P, b\Big) - \frac12 \sign\Big(\frac{Q}P, a\Big).
$$
\end{proposition}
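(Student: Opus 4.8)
{Proof proposal:}
The plan is to work entirely from the pointwise Definition \ref{defn:CI_at_a_point}, reducing the claim to a local computation at each $x \in [a,b]$ and then summing. Fix $x$ and write $\frac{Q}{P} = (X-x)^m \frac{\widetilde Q}{\widetilde P}$ as in that definition, so that $\frac{P}{Q} = (X-x)^{-m}\frac{\widetilde P}{\widetilde Q}$. The crucial elementary observation is that the reduced fraction for $P/Q$ carries the same sign at $x$ as the one for $Q/P$, since $\sign\big(\widetilde P(x)/\widetilde Q(x)\big) = \sign\big(\widetilde Q(x)/\widetilde P(x)\big) =: \sigma_x$; passing to a monic denominator only rescales numerator and denominator by the same nonzero constant and does not affect this sign. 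Thus the local data for $P/Q$ is governed by the exponent $-m$ and the very same $\sigma_x$.

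Next I would record, by a short case analysis on the sign of $m$, the two local identities
$$
\Ind_x^{+}\Big(\frac{Q}{P}\Big) + \Ind_x^{+}\Big(\frac{P}{Q}\Big) = \frac12\Big(\rho_x - \sign\big(\tfrac{Q}{P}, x\big)\Big),
\qquad
\Ind_x^{-}\Big(\frac{Q}{P}\Big) + \Ind_x^{-}\Big(\frac{P}{Q}\Big) = \frac12\Big(\lambda_x - \sign\big(\tfrac{Q}{P}, x\big)\Big),
$$
where $\rho_x = \sigma_x$ and $\lambda_x = (-1)^m\sigma_x$ are the signs of $Q/P$ immediately to the right and to the left of $x$. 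Indeed, when $m<0$ only the $Q/P$ contributions survive and when $m>0$ only the $P/Q$ ones do, each giving $\tfrac12\sigma_x$ (resp. $\tfrac12(-1)^m\sigma_x$), while for $m=0$ both sides vanish because then $\sign(Q/P,x)=\sigma_x$. Subtracting the two identities yields the clean local formula
$$
\Ind_x\Big(\frac{Q}{P}\Big) + \Ind_x\Big(\frac{P}{Q}\Big) = \tfrac12\big(\rho_x - \lambda_x\big),
$$
in which the value-sign $\sign(Q/P,x)$ has cancelled.

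I would then assemble the interval formula. Expanding $\Ind_a^b(Q/P)$ and $\Ind_a^b(P/Q)$ by the interval definition and adding, the left endpoint contributes $\Ind_a^+(Q/P)+\Ind_a^+(P/Q)$, the right endpoint contributes $-\big(\Ind_b^-(Q/P)+\Ind_b^-(P/Q)\big)$, and the interior is the sum over $x\in(a,b)$ of the clean local formula. Substituting the two identities, the value-sign terms occur only at the two endpoints and produce exactly $\tfrac12\sign(Q/P,b) - \tfrac12\sign(Q/P,a)$, the desired right-hand side. What remains is to show that the one-sided-sign terms, namely $\tfrac12\big[\rho_a + \sum_{x\in(a,b)}(\rho_x - \lambda_x) - \lambda_b\big]$, sum to zero.

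This last cancellation is the one genuine point to get right, and I would handle it by telescoping. Only the finitely many roots $x_1 < \dots < x_k$ of $PQ$ in $(a,b)$ contribute to the interior sum (elsewhere $m=0$ and $\rho_x=\lambda_x$), and on each of the root-free open intervals $(a,x_1),(x_1,x_2),\dots,(x_k,b)$ the function $Q/P$ has constant sign $\tau_0,\tau_1,\dots,\tau_k$. Since the sign just to the right of a point equals its sign on the following interval and the sign just to the left equals its sign on the preceding interval, one has $\rho_a=\tau_0$, $\lambda_b=\tau_k$, and $\rho_{x_i}=\tau_i$, $\lambda_{x_i}=\tau_{i-1}$. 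The bracket then becomes $\tau_0 + \sum_{i=1}^k(\tau_i-\tau_{i-1}) - \tau_k = 0$, completing the proof. The main obstacle is thus purely bookkeeping: matching the one-sided signs across consecutive root-free intervals and verifying that even-order roots contribute zero to both the index sum and the telescoping sum, so that restricting the interior sum to the roots of $PQ$ is legitimate.
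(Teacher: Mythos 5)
Your proof is correct: the three local cases ($m<0$, $m>0$, $m=0$) of the two one-sided identities all check out, the value-sign terms cancel in the interior and survive only at the endpoints, and the telescoping of the one-sided signs $\tau_0,\dots,\tau_k$ across the root-free subintervals is exactly the cancellation needed. The paper itself gives no argument here --- it simply cites Eisermann's Theorem 3.9 --- and your write-up is essentially that standard proof (local inversion identity plus telescoping), so it is a faithful, self-contained filling-in of the reference rather than a new route.
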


\begin{proof}{Proof:} See \cite[Theorem 3.9]{Eis}. 
\end{proof}

\section{$(\sigma,\tau)$-chains and Cauchy index}\label{sec:sigma_tau}

The notion of $(\sigma, \tau)$-chain was introduced in \cite{PerRoy}.
Here, we need to introduce 
a slight variation of this notion.

\begin{defn}\label{defn:special_chain} Let 
$n\in\Z_{\ge 1}$ and
$\sigma, \tau \in \{-1, 1\}^{n-1}$
with $\sigma = (\sigma_1, \dots, \sigma_{n-1})$ and 
$\tau = (\tau_1, \dots, \tau_{n-1})$.
A sequence of polynomials $(P_0, \dots, P_n)$ in $\R[X]$
is a \emph{special} $(\sigma, \tau)$-\emph{chain} 
if for $1 \le i \le n-1$ there exist 
$a_i, c_i \in \R\setminus\{0\}$
and $B_i \in \R[X]$
such that
\begin{enumerate}
\item \label{it:1}
$
a_iP_{i+1} + B_iP_i + c_iP_{i-1} = 0,
$
\item \label{it:2} $\sign(a_i) = \sigma_i$,
\item $\sign(c_i) = \tau_i$. 
\end{enumerate}
\end{defn} 

As in \cite{PerRoy}, note that for $n = 1$, taking $\{-1, 1\}^0 = \{ \bullet \}$, 
any sequence $(P_0, P_1)$ in $\R[X]$ is a special $(\bullet, \bullet)$-chain. 

Note also that Sturm sequences are always special $(1,\ldots,1),(1,\ldots,1)$ chains.

\begin{example}[Continuation of Examples \ref{example0}, \ref{ex:example}, \ref{ex:example2} and \ref{ex:example3}]
Taking $\sigma = (1, 1)$ and $\tau = (1, 1)$, then
$(S_0, S_1, S_2, S_3)$ is a special $(\sigma, \tau)$-chain,
with
$$
\begin{array}{rcl}
a_1 &= &1, \\[2mm] 
B_1 & =&  -\frac{X}{5}, \\[2mm]
c_1 & = &1, \\[2mm] 
a_2 & =& 1, \\[2mm]
B_2 &=& 
\frac{25 (64 X^3 \alpha^3 - 80 X^2 \alpha^2 \beta + 100 X \alpha \beta^2 - 125\beta^3)}{256 \alpha^4},
\\[2mm] 
c_2 &=&1.
\end{array}
$$

Taking now $\sigma = (1, 1)$ and $\tau = (1, -1)$, then 
$(T_0, T_1, T_2, T_3)$ is a special $(\sigma, \tau)$-chain,
with
$$
\begin{array}{rcl}
a_1 &= &1, \\[2mm] 
B_1 & =&  -5X, \\[2mm]
c_1 & = & 25, \\[2mm] 
a_2 & =& 25, \\[2mm]
B_2 &=& 
-25(64 X^3\alpha^3 - 80 X^2 \alpha^2 \beta + 100 X \alpha \beta^2 - 125 \beta^3),
 \\[2mm] 
c_2 &=& -6400\alpha^4. 
\end{array}
$$
We will see in Section \ref{sec:proofmain} how to produce
special $(\sigma, \tau)$-chains using Theorem \ref{thm:structure_thm_subresultants} (Structure Theorem of Subresultants). 
\end{example}

We introduce some more useful definition.

\begin{defn}
Let $a, b \in \R$, 
$n \in \Z_{\ge 1}$, 
$(P_0, \dots, P_n)$ in $\R[X]\setminus \{ 0\}$
and $\sigma, \tau \in \{-1, 1\}^{n-1}$. We define
 $\theta(\sigma, \tau)_0 = 1$,
for $ 1 \le i \le n-1$,
$$
\theta(\sigma,\tau)_i = \prod_{1 \le j \le i} \sigma_j \tau_j
$$
and
$$
W(\sigma, \tau)_a^b(P_0, \dots, P_n)
=
\frac12 \sum_{0 \le i \le n-1} 
 \theta(\sigma,\tau)_i  \cdot
\left(\sign\Big(\frac{P_{i+1}}{P_{i}}, b\Big)- \sign\Big(\frac{P_{i+1}}{P_{i}}, a\Big)\right).
$$
\end{defn}

Using the ideas of 
the proof of \cite[Theorem 3.11]{Eis}, 
we obtain the following result for special $(\sigma, \tau)$-chains. 
Note that no assumption on $a$ and $b$ is made. 

\begin{proposition}\label{prop:Cauchy_Ind_eval}
Let 
$a, b \in \R$ with $a < b$, 
$n \in \Z_{\ge 1}$ and  $\sigma, \tau \in \{-1, 1\}^{n-1}$. 
If $(P_0, \dots, P_n)$ in $\R[X]\setminus \{ 0\}$ is a special
$(\sigma, \tau)$-chain 
then
$$
\Ind_a^b\Big(\frac{P_1}{P_0}\Big) + 
\theta(\sigma, \tau)_{n-1} \cdot \Ind_a^b\Big(\frac{P_{n-1}}{P_n}\Big)
= W(\sigma, \tau)_a^b(P_0, \dots, P_n).
$$
\end{proposition}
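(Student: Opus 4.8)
The plan is to prove the identity by induction on $n$, the length of the chain, using the inversion formula (Proposition \ref{prop:inv_formula}) as the base-case engine and the reduction lemma (Lemma \ref{lem:reduce}) to telescope the chain.

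For the base case $n = 1$, the chain is just $(P_0, P_1)$, and the right-hand side $W(\sigma,\tau)_a^b(P_0,P_1)$ is $\frac12\big(\sign(\frac{P_1}{P_0},b) - \sign(\frac{P_1}{P_0},a)\big)$ since $\theta_0 = 1$ and the sum has only the $i=0$ term. On the left, $\theta(\sigma,\tau)_0 = 1$ as well, so I must show
$$
\Ind_a^b\Big(\frac{P_1}{P_0}\Big) + \Ind_a^b\Big(\frac{P_0}{P_1}\Big) = \tfrac12\sign\Big(\frac{P_1}{P_0},b\Big) - \tfrac12\sign\Big(\frac{P_1}{P_0},a\Big),
$$
which is exactly the inversion formula of Proposition \ref{prop:inv_formula}.

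For the inductive step I would pass from the chain $(P_0,\dots,P_n)$ to the shorter chain $(P_1,\dots,P_n)$, which is a special $(\sigma',\tau')$-chain for the shifted tuples $\sigma' = (\sigma_2,\dots,\sigma_{n-1})$, $\tau' = (\tau_2,\dots,\tau_{n-1})$. The relation $a_1 P_2 + B_1 P_1 + c_1 P_0 = 0$ from Definition \ref{defn:special_chain}, rewritten as $c_1 P_0 = -a_1 P_2 - B_1 P_1$, lets me apply Lemma \ref{lem:reduce} (after scaling by $c_1$ via Lemma \ref{lem:mult_constant}) to relate $\Ind_a^b(\frac{P_0}{P_1})$ to $\Ind_a^b(\frac{P_2}{P_1})$ up to the sign factor $\sigma_1\tau_1$; combining this with the inversion formula applied to the pair $(P_0,P_1)$ produces the $i=0$ term of $W$ plus $\theta(\sigma,\tau)_1\cdot\Ind_a^b(\frac{P_1}{P_2})$. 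The inductive hypothesis applied to $(P_1,\dots,P_n)$ then supplies the remaining terms, and the bookkeeping $\theta(\sigma,\tau)_i = \sigma_1\tau_1 \cdot \theta(\sigma',\tau')_{i-1}$ makes the telescoping signs match.

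\textbf{The main obstacle} will be the careful sign and constant tracking in the reduction step: I must apply Lemma \ref{lem:reduce} only after clearing the coefficient $c_1$ (whose sign is $\tau_1$) and the coefficient $a_1$ (whose sign is $\sigma_1$), and then keep track through Lemma \ref{lem:mult_constant} of how $\sign(c_1)$ and $\sign(a_1)$ combine into the factor $\sigma_1\tau_1 = \theta(\sigma,\tau)_1$ that multiplies the tail. Because the entries $P_i$ may share roots at $a$ or $b$, I cannot simply treat the $\sign(\frac{P_{i+1}}{P_i},\cdot)$ terms as $\pm 1$; I must use the degenerate-case $\sign$ convention throughout and rely on the fact — already built into Lemma \ref{lem:reduce} and Proposition \ref{prop:inv_formula} — that these hold with no assumption on $a$ and $b$. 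Verifying that the $i=0$ contribution extracted from the inversion formula exactly matches the $i=0$ summand of $W(\sigma,\tau)_a^b$ while the shifted sum accounts for the rest is the delicate point, but it is a finite, mechanical verification once the induction is set up.
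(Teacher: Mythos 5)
Your proposal is correct and follows essentially the same route as the paper's proof: induction on $n$ with the inversion formula as the base case, then using Lemmas \ref{lem:mult_constant} and \ref{lem:reduce} on the relation $a_1P_2+B_1P_1+c_1P_0=0$ to get $\Ind_a^b\big(\frac{P_0}{P_1}\big)=-\sigma_1\tau_1\,\Ind_a^b\big(\frac{P_2}{P_1}\big)$, followed by the inductive hypothesis on the shifted chain $(P_1,\dots,P_n)$ and the identity $\theta(\sigma,\tau)_i=\sigma_1\tau_1\cdot\theta(\sigma',\tau')_{i-1}$. The only nitpick is that the tail term supplied by the inductive hypothesis is $\sigma_1\tau_1\,\Ind_a^b\big(\frac{P_2}{P_1}\big)$ rather than $\theta(\sigma,\tau)_1\cdot\Ind_a^b\big(\frac{P_1}{P_2}\big)$, but this does not affect the validity of the plan.
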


\begin{proof}{Proof:}
We proceed by induction in $n$. If $n = 1$, the result follows from Proposition \ref{prop:inv_formula} (Inversion Formula). 

Suppose now that $n \ge 2$. 
Taking $a_1, B_1, c_1$ as in Definition \ref{defn:special_chain}, 
by Lemmas \ref{lem:mult_constant} and \ref{lem:reduce} we have
$$\begin{array}{rl}
& \displaystyle{\Ind_a^b\Big(\frac{P_0}{P_1}\Big) + \sigma_1 \cdot \tau_1 \cdot 
\Ind_a^b\Big(\frac{P_2}{P_1}\Big)} \\[5mm]
= & 
\displaystyle{\Ind_a^b\Big(\frac{-a_1P_2 - B_1P_1}{c_1P_1}\Big) + \sigma_1 \cdot \tau_1 \cdot 
\Ind_a^b\Big(\frac{P_2}{P_1}\Big)} \\[5mm]
= & 
-\sign(a_1) \cdot \sign(c_1) \cdot \displaystyle{\Ind_a^b\Big(\frac{P_2}{P_1}\Big) + \sigma_1 \cdot \tau_1 \cdot 
\Ind_a^b\Big(\frac{P_2}{P_1}\Big)} \\[5mm]
= & 0.
\end{array}
$$
We consider $\sigma' = (\sigma_2, \dots, \sigma_{n-1})$, 
$\tau' = (\tau_2, \dots, \tau_{n-1})$
and we apply the inductive hypothesis to the special $(\sigma', \tau')$-chain 
$(P_1, \dots, P_n)$. 
For $1 \le i \le n-1$ we have that $\theta(\sigma, \tau)_i = \sigma_1 \cdot \tau_1 
\cdot \theta(\sigma', \tau')_{i-1}$. 
Finally, using Proposition \ref{prop:inv_formula} (Inversion Formula) and the inductive hypothesis,
$$\begin{array}{rl}
&
\displaystyle{\Ind_a^b\Big(\frac{P_1}{P_0}\Big) + 
\theta(\sigma, \tau)_{n-1} \cdot \Ind_a^b\Big(\frac{P_{n-1}}{P_n}\Big)}
\\[5mm]
 = &
\displaystyle{\Ind_a^b\Big(\frac{P_1}{P_0}\Big) + 
\Ind_a^b\Big(\frac{P_0}{P_1}\Big) + 
\sigma_1 \cdot \tau_1 \cdot \Ind_a^b\Big(\frac{P_2}{P_1}\Big) +
\sigma_1 \cdot \tau_1 \cdot \theta(\sigma', \tau')_{n-2} \cdot \Ind_a^b\Big(\frac{P_{n-1}}{P_n}\Big)}
\\[5mm]
 = &
\displaystyle{- \frac12\sign\Big(\frac{P_1}{P_0}, a\Big) + \frac12 \sign\Big(\frac{P_1}{P_0}, b\Big)
+ \sigma_1 \cdot \tau_1 \cdot W(\sigma', \tau')_a^b(P_1, \dots, P_n)}\\[5mm]
=& \displaystyle{W(\sigma, \tau)_a^b(P_0, \dots, P_n)}
\end{array}
$$
as we wanted to prove.
\end{proof}

\begin{corollary}\label{cor:main_CI}
Let 
$a, b \in \R$ with $a < b$, 
$n \in \Z_{\ge 1}$ and  $\sigma, \tau \in \{-1, 1\}^{n-1}$. 
If $(P_0, \dots, P_n)$ in $\R[X]\setminus \{ 0\}$ is a special
$(\sigma, \tau)$-chain 
and $P_n$ divides $P_{n-1}$,
then
$$
\Ind_a^b\Big(\frac{P_1}{P_0}\Big) = W(\sigma, \tau)_a^b(P_0, \dots, P_n).
$$
 
\end{corollary}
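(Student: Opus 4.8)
The plan is to deduce Corollary~\ref{cor:main_CI} directly from Proposition~\ref{prop:Cauchy_Ind_eval} by showing that the extra term $\theta(\sigma,\tau)_{n-1}\cdot\Ind_a^b\big(\frac{P_{n-1}}{P_n}\big)$ on the left-hand side vanishes. Proposition~\ref{prop:Cauchy_Ind_eval} gives, for any special $(\sigma,\tau)$-chain, the identity
$$
\Ind_a^b\Big(\frac{P_1}{P_0}\Big) + \theta(\sigma,\tau)_{n-1}\cdot\Ind_a^b\Big(\frac{P_{n-1}}{P_n}\Big) = W(\sigma,\tau)_a^b(P_0,\dots,P_n),
$$
so it suffices to prove that $\Ind_a^b\big(\frac{P_{n-1}}{P_n}\big) = 0$ under the hypothesis that $P_n$ divides $P_{n-1}$.

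First I would invoke the divisibility hypothesis: since $P_n$ divides $P_{n-1}$, there is some $T\in\R[X]$ with $P_{n-1} = P_n\cdot T$, and because we are working in $\R[X]\setminus\{0\}$ we have $T\neq 0$. The rational function $\frac{P_{n-1}}{P_n}$ therefore equals the polynomial $T$, which has no poles. The key observation is then that the Cauchy index of a polynomial (equivalently, of a rational function with no poles) is zero: at every $x\in\R$ we have $m\geq 0$ in the local factorization of Definition~\ref{defn:CI_at_a_point}, so $\Ind_x^\varepsilon = 0$ for both $\varepsilon$, hence $\Ind_x = 0$, and summing the local contributions over $(a,b)$ together with the boundary terms gives $\Ind_a^b\big(\frac{P_{n-1}}{P_n}\big)=0$.

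To make this last point rigorous within the framework already set up, I would appeal to Lemma~\ref{lem:reduce}: writing $P_{n-1} = P_n\cdot T + 0$, that is, taking the remainder $R$ to be the zero polynomial, would be the natural route, except that Lemma~\ref{lem:reduce} is stated for $R\in\R[X]\setminus\{0\}$. So instead I would argue directly from the definition as above, noting that $\frac{P_{n-1}}{P_n}$ reduces, after cancelling the common factor $P_n$, to the coprime form $\frac{T}{1}$ with monic denominator $1$; since $1$ has no real roots, there are no poles, and every local index vanishes. This yields $\Ind_a^b\big(\frac{P_{n-1}}{P_n}\big)=0$.

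Substituting this into the identity from Proposition~\ref{prop:Cauchy_Ind_eval} immediately gives
$$
\Ind_a^b\Big(\frac{P_1}{P_0}\Big) = W(\sigma,\tau)_a^b(P_0,\dots,P_n),
$$
which is exactly the claim. I do not expect any serious obstacle here; the only mild subtlety is the degenerate-index bookkeeping in confirming that a pole-free rational function has vanishing Cauchy index, which is a routine unwinding of Definition~\ref{defn:CI_at_a_point} rather than a genuine difficulty.
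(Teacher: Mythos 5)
Your proposal is correct and follows exactly the route the paper intends (the corollary is stated without an explicit proof, being an immediate consequence of Proposition~\ref{prop:Cauchy_Ind_eval}): since $P_n$ divides $P_{n-1}$, the function $\frac{P_{n-1}}{P_n}$ is a polynomial, so every local index in Definition~\ref{defn:CI_at_a_point} vanishes (as $m\ge 0$ everywhere) and the second term on the left-hand side of Proposition~\ref{prop:Cauchy_Ind_eval} is zero. Your side remark that Lemma~\ref{lem:reduce} cannot be applied with $R=0$ and that one must instead unwind the definition directly is accurate and well handled.
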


As mentioned in the Introduction, Theorem \ref{thm:eiser}
can be deduced from Corollary \ref{cor:main_CI}.

\begin{proof}{Proof of Theorem \ref{thm:eiser} :}
Theorem \ref{thm:eiser} is a special case of Corollary
\ref{cor:main_CI} 
taking $\sigma = (1,\ldots,1)$ and $\tau = (1,\ldots,1)$, 
since the Sturm sequence is a special $((1,\ldots,1),(1,\ldots,1))$-chain and
$S_s$ divides $S_{s-1}$.
\end{proof}

\section{Proof of the main results }\label{sec:proofmain}

We fix the notation we will use from this point.

\begin{notn}\label{notn:final}
Let $P, Q
\in \R[X] \setminus \{0\}$ 
with $\deg P
 = p \ge 1$ and $\deg Q
  = q < p$.
Let 
$(d_0,\ldots,d_s)$ be
the sequence of degrees of the non-defective subresultant polynomials of $P$ and 
$Q$ in decreasing order
and let 
$d_{-1} = p+1$.

\begin{itemize}
\item Using the notation from Theorem \ref{thm:structure_thm_subresultants}, for $1 \le i \le s-1$, let 
$$
\begin{array}{rcl}
a_i & = &  t_{i-1} \cdot 
{\sRes}_{d_{i-1}}(P,Q)  \ \in \R,
\\[2mm]
B_i & = & -{\rm Quot}
\left(
t_{i} \cdot 
{\sRes}_{d_{i}}(P,Q)
\cdot T_{i-1}
,
T_i 
\right) \
 \in \R[X],
\\[2mm]
c_i & = & t_{i} \cdot 
{\sRes}_{d_{i}}(P,Q)  \
 \in \R.
\cr
\end{array}
$$

\item For $1 \le i \le s-1$, let 
$$
\begin{array}{rcl}
\sigma_i & = & \sign(a_i)  \ \in \{-1,1\}, \\[2mm]
\tau_i & = & \sign(c_i) \ \in \{-1,1\},\\[2mm]
\end{array}
$$
and let $\sigma = (\sigma_1, \dots, \sigma_{s-1})$ 
and 
$\tau = (\tau_1, \dots, \tau_{s-1})$.
\end{itemize}
\end{notn}

\begin{lemma}
 $(T_0, \dots, T_s)$ is a special $(\sigma, \tau)$-chain.
In addition, $T_s$ divides
all its elements.
\end{lemma}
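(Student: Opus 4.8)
The plan is to verify the two requirements directly from the Structure Theorem of Subresultants (Theorem \ref{thm:structure_thm_subresultants}), since all the necessary algebraic identities are already encoded there. The statement that $(T_0, \dots, T_s)$ is a special $(\sigma, \tau)$-chain requires producing, for each $1 \le i \le s-1$, constants $a_i, c_i \in \R \setminus \{0\}$ and a polynomial $B_i \in \R[X]$ satisfying the three-term relation $a_i T_{i+1} + B_i T_i + c_i T_{i-1} = 0$ together with $\sign(a_i) = \sigma_i$ and $\sign(c_i) = \tau_i$. These are exactly the $a_i, B_i, c_i$ introduced in Notation \ref{notn:final}, so the real content is to check that relation \eqref{it:1} of Definition \ref{defn:special_chain} holds with these choices.

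First I would rewrite the second bullet of the Structure Theorem, equation \eqref{eq:constant_STS_2}, which states
$$
t_{i-1} \cdot {\sRes}_{d_{i-1}}(P,Q) \cdot T_{i+1} = -{\rm Rem}\left(t_i \cdot {\sRes}_{d_i}(P,Q) \cdot T_{i-1}, T_i\right),
$$
and moreover guarantees that the corresponding quotient lies in $\D[X]$. Writing the euclidean division of $t_i \cdot {\sRes}_{d_i}(P,Q) \cdot T_{i-1}$ by $T_i$ as dividend $=$ quotient $\cdot\, T_i +$ remainder, and denoting the quotient by ${\rm Quot}(\cdot,\cdot)$, this identity becomes
$$
t_{i-1} \cdot {\sRes}_{d_{i-1}}(P,Q) \cdot T_{i+1} + {\rm Quot}\left(t_i \cdot {\sRes}_{d_i}(P,Q) \cdot T_{i-1}, T_i\right)\cdot T_i - t_i \cdot {\sRes}_{d_i}(P,Q) \cdot T_{i-1} = 0.
$$
Substituting $a_i = t_{i-1}\cdot {\sRes}_{d_{i-1}}(P,Q)$, $B_i = -{\rm Quot}(t_i \cdot {\sRes}_{d_i}(P,Q)\cdot T_{i-1}, T_i)$, and $c_i = t_i \cdot {\sRes}_{d_i}(P,Q)$, this is precisely $a_i T_{i+1} + B_i T_i + c_i T_{i-1} = 0$, establishing \eqref{it:1}. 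I would then note that $a_i \ne 0$ and $c_i \ne 0$ because the $t_j$ are leading coefficients of nonzero polynomials and the ${\sRes}_{d_j}(P,Q)$ are nonzero by the non-defectiveness of the $T_j$ (via formula \eqref{eq:constant_STS_1} and the fact that ${\sRes}_{d_0} = {\sRes}_p = 1$), so $\sigma_i, \tau_i \in \{-1,1\}$ are well-defined and match by construction. Conditions \eqref{it:2} and \eqref{it:3} then hold trivially from the definitions $\sigma_i = \sign(a_i)$, $\tau_i = \sign(c_i)$.

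For the second assertion, that $T_s$ divides every $T_i$, I would invoke the third bullet of the Structure Theorem: both $T_s$ and ${\sResP}_{d_s}(P,Q)$ are greatest common divisors of $P$ and $Q$ in ${\rm ff}(\D)[X]$, and ${\sResP}_{d_s}(P,Q)$ divides ${\sResP}_j(P,Q)$ for all $0 \le j \le p$. Since each $T_i$ equals ${\sResP}_{d_{i-1}-1}(P,Q)$ (and $T_0 = P$, $T_s = {\sResP}_{d_{s-1}-1}(P,Q)$ is proportional to the gcd), and $T_s$ is proportional to ${\sResP}_{d_s}(P,Q)$, divisibility of every $T_i$ by $T_s$ follows. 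The main obstacle I anticipate is purely bookkeeping: matching the indices $d_{i-1}-1$ versus $d_i$ of the two proportional subresultant polynomials and confirming that the quotient ${\rm Quot}(\cdot,\cdot)$ taken in ${\rm ff}(\D)[X]$ actually yields a polynomial with real coefficients so that $B_i \in \R[X]$ — but both facts are supplied verbatim by the Structure Theorem, so no genuine difficulty arises beyond careful substitution.
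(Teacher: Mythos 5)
Your proof is correct and follows essentially the same route as the paper's: the three-term relation is read off from equation \eqref{eq:constant_STS_2} of the Structure Theorem with the $a_i, B_i, c_i$ of Notation \ref{notn:final}, the sign conditions hold by the very definition of $\sigma$ and $\tau$, and the divisibility claim is the third bullet of the Structure Theorem. The paper's proof is just a terser version of the same argument.
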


\begin{proof}{Proof:}
Recall that $T_0 = P$ and $T_1 = Q$. Also,  by the Structure Theorem of Subresultants
(Theorem \ref{thm:structure_thm_subresultants}), 
we have that for $1 \le i \le s-1$,
$$
a_iT_{i+1} + B_iT_i + c_iT_{i-1} = 0.
$$
The claim follows from the definition of $\sigma,\tau$.
\end{proof}

The following lemma explores the relation between the signs of the leading coefficients of the
subresultants polynomials.

\begin{lemma}\label{lem:sign_sres} Let $P, Q
\in \R[X] \setminus \{0\}$ 
with $\deg P
 = p \ge 1$ and $\deg Q
  = q < p$. Following Notation \ref{notn:epsilon} and \ref{notn:lead_non_def},
for $0 \le i \le s$, 
$$
{\rm sign}({\rm sRes}_{d_i}(P,Q))
= \epsilon_{d_{{\rm p}(i) - 1} - d_i} \cdot {\rm sign}(t_{{\rm p}(i)}).
$$
\end{lemma}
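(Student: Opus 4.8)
I need to prove that for $0 \le i \le s$,
$$
{\rm sign}({\rm sRes}_{d_i}(P,Q)) = \epsilon_{d_{{\rm p}(i)-1}-d_i}\cdot {\rm sign}(t_{{\rm p}(i)}).
$$
The key tool is formula~\eqref{eq:constant_STS_1} from the Structure Theorem, which expresses ${\rm sRes}_{d_i}(P,Q)$ in terms of $t_i$ and ${\rm sRes}_{d_{i-1}}(P,Q)$, together with the factor $\epsilon_{d_{i-1}-d_i}$. The quantity ${\rm p}(i)$ singles out the largest index $j \le i$ at which $d_{j-1}-d_j$ is odd, and I expect this to mark the most recent place where an odd gap forces a genuine sign contribution.

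**The plan.** The natural approach is induction on $i$. For the base case $i=0$, note ${\rm sRes}_{d_0}(P,Q) = {\rm sRes}_p(P,Q) = 1$ by convention, ${\rm p}(0)=0$, $d_{-1}-d_0 = 1$, so $\epsilon_1 = 1$ and $t_0 = 1$; both sides equal $1$. For the inductive step, I would assume the formula holds at $i-1$ and prove it at $i$ using~\eqref{eq:constant_STS_1}. Taking signs in that formula, since the denominator ${\rm sRes}_{d_{i-1}}(P,Q)^{d_{i-1}-d_i-1}$ is a power, I get
$$
{\rm sign}({\rm sRes}_{d_i}(P,Q)) = \epsilon_{d_{i-1}-d_i}\cdot {\rm sign}(t_i)^{d_{i-1}-d_i}\cdot {\rm sign}({\rm sRes}_{d_{i-1}}(P,Q))^{d_{i-1}-d_i-1}.
$$

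**Case analysis on the parity of the gap.** The argument splits according to whether $d_{i-1}-d_i$ is odd or even. If $d_{i-1}-d_i$ is odd, then ${\rm p}(i)=i$, so I must show the right-hand side equals $\epsilon_{d_{i-1}-d_i}\cdot {\rm sign}(t_i)$. In this case ${\rm sign}(t_i)^{d_{i-1}-d_i} = {\rm sign}(t_i)$ (odd exponent) and ${\rm sign}({\rm sRes}_{d_{i-1}})^{d_{i-1}-d_i-1} = 1$ (even exponent), which gives exactly the claim. If $d_{i-1}-d_i$ is even, then ${\rm p}(i)={\rm p}(i-1)$, so I must show the right-hand side equals $\epsilon_{d_{{\rm p}(i-1)-1}-d_i}\cdot {\rm sign}(t_{{\rm p}(i-1)})$. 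Here ${\rm sign}(t_i)^{d_{i-1}-d_i}=1$ (even exponent) and ${\rm sign}({\rm sRes}_{d_{i-1}})^{d_{i-1}-d_i-1} = {\rm sign}({\rm sRes}_{d_{i-1}})$ (odd exponent), so the expression becomes $\epsilon_{d_{i-1}-d_i}\cdot {\rm sign}({\rm sRes}_{d_{i-1}}(P,Q))$; applying the inductive hypothesis at $i-1$ turns this into $\epsilon_{d_{i-1}-d_i}\cdot \epsilon_{d_{{\rm p}(i-1)-1}-d_{i-1}}\cdot {\rm sign}(t_{{\rm p}(i-1)})$.

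**The main obstacle.** The crux is the even case: I must verify the $\epsilon$-identity $\epsilon_{d_{i-1}-d_i}\cdot \epsilon_{d_{{\rm p}(i-1)-1}-d_{i-1}} = \epsilon_{d_{{\rm p}(i-1)-1}-d_i}$ when $d_{i-1}-d_i$ is even. This is where~\eqref{rem:epsilon_bis} is needed: writing $d_{i-1}-d_i = 2k$, I have $\epsilon_{d_{{\rm p}(i-1)-1}-d_i} = \epsilon_{2k + (d_{{\rm p}(i-1)-1}-d_{i-1})} = \epsilon_{2k}\cdot\epsilon_{d_{{\rm p}(i-1)-1}-d_{i-1}}$, and since $\epsilon_{2k}=(-1)^{k(2k-1)}=(-1)^k = \epsilon_{d_{i-1}-d_i}$, the identity follows. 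I should double-check this last equality $\epsilon_{2k}=\epsilon_{d_{i-1}-d_i}$ carefully, as it is the step most prone to a sign slip; it holds because $\epsilon_{2k}=(-1)^{k}$ by Notation~\ref{notn:epsilon}, matching $\epsilon_{2k}$ as given directly in~\eqref{rem:epsilon_bis} with $n=0$. With this parity bookkeeping in place the induction closes cleanly.
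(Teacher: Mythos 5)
Your proof is correct and follows essentially the same route as the paper's: take signs in formula~(\ref{eq:constant_STS_1}), split on the parity of $d_{i-1}-d_i$ (noting ${\rm p}(i)=i$ in the odd case and ${\rm p}(i)={\rm p}(i-1)$ in the even case), and close the even case with the identity~(\ref{rem:epsilon_bis}). The only cosmetic difference is that you induct on $i$ while the paper inducts on $i-{\rm p}(i)$; both organizations are valid and the computations are identical.
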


\begin{proof}{Proof:}
For $i = 0$ the result is clear. 
For $1 \le i \le s$, by the Structure Theorem of Subresultants (Theorem \ref{thm:structure_thm_subresultants}), 
$$
{\rm sign}({\rm sRes}_{d_i}(P,Q))
=
\epsilon_{d_{i-1} - d_i} \cdot {\rm sign}(t_{i})^{d_{i-1} - d_i} \cdot 
{\rm sign}({\rm sRes}_{d_{i-1}}(P,Q))^{d_{i-1} - d_i-1}.
$$
We proceed then by induction on $i - {\rm p}(i)$. If $i = {\rm p}(i)$, then $d_{i-1} - d_i$ is odd and 
$$
{\rm sign}({\rm sRes}_{d_i}(P,Q))
=
\epsilon_{d_{{\rm p}(i) - 1} - d_i} \cdot {\rm sign}(t_{{\rm p}(i)}).
$$
If $i > {\rm p}(i)$, then $d_{i-1} - d_i$ is even, ${\rm p}(i) = {\rm p}(i-1)$ and $i-1 - {\rm p}(i-1) < i -{\rm p}(i)$; 
therefore by the inductive hypothesis,
$$
{\rm sign}({\rm sRes}_{d_i}(P,Q))
=
\epsilon_{d_{i-1} - d_i} \cdot {\rm sign}({\rm sRes}_{d_{i-1}}(P,Q)) = 
\epsilon_{d_{i-1} - d_i} \cdot  \epsilon_{d_{{\rm p}(i-1) - 1} - d_{i-1}} \cdot {\rm sign}(t_{{\rm p}(i-1)}) =
$$
$$
=
\epsilon_{d_{i-1} - d_i} \cdot  \epsilon_{d_{{\rm p}(i) - 1} - d_{i-1}} \cdot {\rm sign}(t_{{\rm p}(i)}) = 
\epsilon_{d_{{\rm p}(i) - 1} - d_i} \cdot {\rm sign}(t_{{\rm p}(i)})
$$
using equation (\ref{rem:epsilon_bis}).
\end{proof}

Now we are ready to prove Theorem \ref{thm:main_thm}.

\begin{proof}{Proof of Theorem \ref{thm:main_thm}:}
By Corollary \ref{cor:main_CI}, since
$(T_0, \dots, T_s)$ is a special $(\sigma, \tau)$-chain and $T_s$ divides $T_{s-1}$, 
$$
\Ind_a^b\Big(\frac{Q}{P}\Big)  
= W(\sigma, \tau)_a^b(T_0, \dots, T_s) =
\frac{1}{2}\sum_{0 \le i \le s-1}\theta(\sigma, \tau)_i \cdot
\left(
\sign\Big( \frac{T_{i+1}}{T_{i}}, b \Big) 
-
\sign\Big( \frac{T_{i+1}}{T_{i}}, a \Big) 
\right).
$$
So, we only need to prove that for $0 \le i \le s-1$, 
$$
\theta(\sigma, \tau)_i = \epsilon_{d_{{\rm p}(i) - 1} - d_i}\cdot 
{\rm sign}(t_{{\rm p}(i)}) \cdot {\rm sign}(t_i).
$$
Indeed, using Lemma \ref{lem:sign_sres}, 
$$\begin{array}{rcl}
\displaystyle{\theta(\sigma, \tau)_i} &= &
\displaystyle{\prod_{1 \le j \le i}\sigma_j\cdot \tau_j} \\[5mm]
&= &
\displaystyle{\prod_{1 \le j \le i}
{\rm sign}(t_{j-1}) \cdot {\rm sign}({\rm sRes}_{d_{j-1}}(P,Q)) 
\cdot {\rm sign}(t_{j}) \cdot {\rm sign}({\rm sRes}_{d_j}(P,Q))}\\[5mm]
&=& \displaystyle{{\rm sign}({\rm sRes}_{d_i}(P,Q))\cdot{\rm sign}(t_{i})}\\[5mm]
&=&\displaystyle{\epsilon_{d_{{\rm p}(i) - 1} - d_{i}} \cdot {\rm sign}(t_{{\rm p}(i)}) \cdot 
{\rm sign}(t_{i})}
\end{array}
$$
and we are done. 
\end{proof}

From Theorem \ref{thm:main_thm}, we can easily deduce Theorem \ref{thm:main_thm_variations} 
as follows. 

\begin{proof}{Proof of Theorem \ref{thm:main_thm_variations}:}
Theorem  \ref{thm:structure_thm_subresultants} implies that, if 
for some $0 \le i \le s-1$, two consecutive polynomials
 $T_i$ and $T_{i+1}$ in the sequence $(T_0,  \dots, T_s)$ have a common root $x$, 
then every polynomial in this sequence has $x$ as a root. So, suppose now that 
$a$ and $b$ are not common roots of $T_0 = P$ and $T_1 = Q$, therefore they are not common 
roots of $T_i$ and $T_{i+1}$ for any $0 \le i \le s-1$.

The proof is finished using the formula from Theorem \ref{thm:main_thm} for the Cauchy index
$\displaystyle{\Ind_a^b\Big(\frac{Q}{P}\Big)}$ and the identity $(\ref{signvar})$.
\end{proof}

Finally, we prove Theorem \ref{thm:ind_all_R}.

\begin{proof}{Proof or Theorem \ref{thm:ind_all_R}:}
We introduce the notation
$$\begin{array}{rcl}
{\rm Var}_{+\infty}(P,Q) &=&
\frac12\Big|
\sign({\rm lc}(P)) - \sign({\rm lc}(Q)) 
\Big|
,\\[3mm]
{\rm Var}_{-\infty}(P,Q) &=&
\frac12
\Big|
(-1)^{\deg (P)}\sign({\rm lc}(P)) - (-1)^{\deg (Q)}\sign({\rm lc}(Q)) 
\Big|,\\[3mm]
{\rm Var}_{-\infty}^{+\infty}(P,Q)& =&{\rm Var}_{-\infty}(P,Q)
-{\rm Var}_{+\infty}(P,Q).
\end{array}
$$
Note that, if $\deg(P) - \deg(Q)$ is even, then 
${\rm Var}_{-\infty}^{+\infty}(P,Q) = 0$, and if 
$\deg(P) - \deg(Q)$ is odd, then 
${\rm Var}_{-\infty}^{+\infty}(P,Q) = \sign({\rm lc}(P))\cdot \sign({\rm lc}(Q))$.

Choosing $r \in \R$ big enough and applying Theorem
\ref{thm:main_thm_variations}, 
$$\begin{array}{rcl}
\displaystyle{\Ind_{\R}\Big(\frac{Q}{P}\Big)} &=& 
\displaystyle{\Ind_{-r}^r\Big(\frac{Q}{P}\Big)} \\[6mm]
&=&
\displaystyle{\sum_{0 \le i \le s-1} 
\epsilon_{d_{{\rm p}(i) - 1} - d_i}\cdot {\rm sign}(t_{{\rm p}(i)}) \cdot {\rm sign}(t_i) \cdot
{\rm Var}_{-r}^{r}(T_i, T_{i+1})} \\[6mm]
&=&
\displaystyle{\sum_{0 \le i \le s-1} 
\epsilon_{d_{{\rm p}(i) - 1} - d_i}\cdot {\rm sign}(t_{{\rm p}(i)}) \cdot {\rm sign}(t_i) \cdot
{\rm Var}_{-\infty}^{+\infty}(T_i, T_{i+1})}\\[6mm]
&=&
\displaystyle{\sum_{0 \le i \le s-1, \atop  d_{i} - d_{i+1} {\scriptsize\hbox{odd}}} 
\epsilon_{d_{{\rm p}(i) - 1} - d_i}\cdot {\rm sign}(t_{{\rm p}(i)}) 
\cdot {\rm sign}(t_i) \cdot {\rm sign}({\rm lc}(T_i))\cdot {\rm sign}({\rm lc}(T_{i+1}))}.
\end{array}
$$
From this identity the result can be easily proved, taking into account that for $i \ge 1$, 
$t_i = {\rm lc}(T_i)$, but  there is an ad-hoc definition of $t_0 = 1$ (and not as the leading coefficient
of $T_0 = P$). 
\end{proof}

\textbf{Acknowledgements:} We are thankful to the anonymous referees for their helpful remarks and 
suggestions.

\end{document}